\documentclass[a4paper,12pt,reqno]{amsart}
\pdfoutput=1
\usepackage{amssymb,amsmath,amsthm,amsfonts,centernot}
\usepackage[colorlinks=true,linkcolor=blue,citecolor=blue,urlcolor=cyan, pdfpagelabels=false]{hyperref}

\oddsidemargin = 0in 
\evensidemargin = 0in 
\textwidth =6.5in
\textheight=9in 
\topmargin=0in

\usepackage{amsmath,tikz}
\usepackage{makecell}
\usepackage{dsfont}
\usetikzlibrary{matrix}
\usepackage{perpage}
\usepackage{makecell}
\usepackage{bm}
\MakePerPage{footnote}
\begin{document}
\numberwithin{equation}{section}
\newtheorem{thm}{Theorem}
\newtheorem{algo}{Algorithm}
\newtheorem{lem}{Lemma} 
\newtheorem{de}{Definition} 
\newtheorem{ex}{Example}
\newtheorem{pr}{Proposition} 
\newtheorem{claim}{Claim} 
\newtheorem{re}{Remark}
\newtheorem{co}{Corollary}
\newtheorem{conv}{Convention}
\newcommand{\di}{\hspace{1.5pt} \big|\hspace{1.5pt}}
\newcommand{\idi}{\hspace{.5pt}|\hspace{.5pt}}
\newcommand{\hs}{\hspace{1.3pt}}
\newcommand{\thmf}{Theorem~1.15$'$}
\newcommand{\ndi}{\centernot{\big|}}
\newcommand{\nidi}{\hspace{.5pt}\centernot{|}\hspace{.5pt}}
\newcommand{\lp}{\mbox{$\hspace{0.12em}\shortmid\hspace{-0.62em}\alpha$}}
\newcommand{\PQ}{\bb{P}^1(\bb{Q})}
\newcommand{\pmn}{\cl{P}_{M,N}}
\newcommand{\lcm}{\operatorname{lcm}}
\newcommand{\he}{holomorphic eta quotient\hspace*{2.5pt}}
\newcommand{\hes}{holomorphic eta quotients\hspace*{2.5pt}}
\newcommand{\defG}{Let $G\subset\GG$ be a subgroup that is conjugate to a finite index subgroup of $\G$. } 
\newcommand{\defg}{Let $G\subset\GG$ be a subgroup that is conjugate to a finite index subgroup of $\G$\hs\hs} 
\renewcommand{\phi}{\varphi}
\newcommand{\Z}{\bb{Z}}
\newcommand{\ZD}{\Z^{\D}}
\newcommand{\N}{\bb{N}}
\newcommand{\Q}{\bb{Q}}
\newcommand{\pii}{{{\pi}}}
\newcommand{\R}{\bb{R}}
\newcommand{\C}{\bb{C}}
\newcommand{\I}{\hs\cl{I}_{n,N}}
\newcommand{\SL}{\operatorname{SL}_2(\Z)}
\newcommand{\St}{\operatorname{Stab}}
\newcommand{\D}{\cl{D}_N}
\newcommand{\rh}{{{\boldsymbol\rho}}}
\newcommand{\bh}{{\cl{M}}} 
\newcommand{\lv}{\hyperlink{level}{{\text{level}}}\hspace*{2.5pt}}
\newcommand{\fct}{\hyperlink{factor}{{\text{factor}}}\hspace*{2.5pt}}
\newcommand{\q}{\hyperlink{q}{{\mathbin{q}}}}
\newcommand{\rd}{\hyperlink{redu}{{{\text{reducible}}}}\hspace*{2.5pt}}
\newcommand{\ird}{\hyperlink{irredu}{{{\text{irreducible}}}}\hspace*{2.5pt}}
\newcommand{\str}{\hyperlink{strong}{{{\text{strongly reducible}}}}\hspace*{2.5pt}}
\newcommand{\rdn}{\hyperlink{redon}{{{\text{reducible on}}}}\hspace*{2.5pt}}
\newcommand{\atl}{\hyperlink{atinv}{{\text{Atkin-Lehner involution}}}\hspace*{3.5pt}}
\newcommand{\atls}{\hyperlink{atinv}{{\text{Atkin-Lehner involutions}}}\hspace*{3.5pt}}
\newcommand{\T}{\mathrm{T}}
\renewcommand{\H}{\fr{H}}
\newcommand{\W}{\text{\calligra W}_n}
\newcommand{\GG}{\cl{G}}
\newcommand{\g}{\fr{g}}
\newcommand{\Gm}{\Gamma}
\newcommand{\Gmtl}{\widetilde{\Gamma}_\ell}
\newcommand{\gm}{\gamma}
\newcommand{\go}{\gamma_1}
\newcommand{\gmt}{\widetilde{\gamma}}
\newcommand{\gmdt}{\widetilde{\gamma}'}
\newcommand{\gmot}{\widetilde{\gamma}_1}
\newcommand{\gmdot}{{\widetilde{\gamma}}'_1}
\newcommand{\s}{\Large\text{{\calligra r}}\hspace{1.5pt}}
\newcommand{\ms}{m_{{{S}}}}
\newcommand{\nisim}{\centernot{\sim}}
\newcommand{\level}{\hyperlink{level}{{\text{level}}}}
\newcommand{\Redcon}{the \hyperlink{red}{\text{Reducibility~Conjecture}}}
\newcommand{\ReDcon}{The \hyperlink{red}{\text{Reducibility~Conjecture}}}
\newtheorem*{ThmA}{Theorem A}
\newtheorem{Conj}{Conjecture}
\newtheorem*{ThmB}{Theorem B}
\newtheorem*{ThmC}{Theorem C}
\newtheorem*{lmB}{Lemma B}
\newtheorem*{CorA}{Corollary A}
\newtheorem*{CorB}{Corollary B}
\newtheorem*{CorC}{Corollary C}
\newcommand{\Conred}{Conjecture~$1'$}
\newcommand{\effth}{Theorem~$\ref{27.11.2015}'$}
\newcommand{\Conredd}{Conjecture~$1''$}
\newcommand{\Conreddd}{Conjecture~$1'''$}
\newcommand{\Conired}{Conjecture~$2'$}
\newtheorem*{pro}{\textnormal{\textit{Proof of Lemma~\ref{27.11.2015.1}}}}
\newtheorem*{cau}{Caution}
\newtheorem{thrmm}{Theorem}[section]
\newtheorem*{thmA}{Theorem~A}
\newtheorem*{corA}{Corollary~A}
\newtheorem*{corB}{Corollary~B}
\newtheorem*{corC}{Corollary~C}
\newtheorem{no}{Notation}
\renewcommand{\thefootnote}{\fnsymbol{footnote}}
\newtheorem{oq}{Open Question}
\newtheorem{hy}{Hypothesis} 
\newtheorem{expl}{Example}
\newcommand\ileg[2]{\bigl(\frac{#1}{#2}\bigr)}
\newcommand\leg[2]{\Bigl(\frac{#1}{#2}\Bigr)}
\newcommand{\e}{\eta}
\newcommand{\sgn}{\operatorname{sgn}}
\newcommand{\bb}{\mathbb}
\newtheorem*{conred}{Conjecture~\ref{con1}$\mathbf{'}$}
\newtheorem*{conredd}{Conjecture~\ref{con1}$\mathbf{''}$}
\newtheorem*{conreddd}{Conjecture~\ref{con1}$\mathbf{'''}$}
\newtheorem*{conired}{Conjecture~\ref{19.1Aug}$\mathbf{'}$}
\newtheorem*{efth}{Theorem~\ref{27.11.2015}$\mathbf{'}$}
\newtheorem*{eflem}{Theorem~\ref{15May15}.$(b)\mathbf{'}$}
\newtheorem*{procl}{\textnormal{\textit{Proof}}}
\newtheorem*{thmff}{Theorem~\ref{7.4Jul}$\mathbf{'}$}
\newtheorem*{coo}{Corollary~\ref{17Aug}$\mathbf{'}$}
\newcommand{\cooo}{Corollary~\ref{17Aug}$'$}
\newtheorem*{cotw}{Corollary~\ref{17.1Aug}$\mathbf{'}$}
\newcommand{\cotww}{Corollary~\ref{17.1Aug}$'$}
\newtheorem*{cothr}{Corollary~\ref{17.2Aug}$\mathbf{'}$}
\newcommand{\cothre}{Corollary~\ref{17.2Aug}$'$}
\newtheorem*{cne}{Corollary~\ref{15.5Aug}$\mathbf{'}$}
\newcommand{\cnew}{Corollary~\ref{15.5Aug}$'$\hspace{3.5pt}}
\newcommand{\fr}{\mathfrak}
\newcommand{\cl}{\mathcal}
\newcommand{\rad}{\mathrm{rad}}
\newcommand{\ord}{\operatorname{ord}}
\newcommand{\m}{\setminus}
\newcommand{\G}{\Gamma_1}
\newcommand{\GN}{\Gamma_0(N)}
\newcommand{\X}{\widetilde{X}}
\renewcommand{\P}{{\textup{p}}} 
\newcommand{\al}{{\hs\operatorname{al}}}
\newcommand{\p}{p_\text{\tiny (\textit{N})}}
\newcommand{\pN}{p_\text{\tiny\textit{N}}}
\newcommand{\bt}{\mbox{$\raisebox{-0.59ex}
  {${{l}}$}\hspace{-0.215em}\beta\hspace{-0.88em}\raisebox{-0.98ex}{\scalebox{2}
  {$\color{white}.$}}\hspace{-0.416em}\raisebox{+0.88ex}
  {$\color{white}.$}\hspace{0.46em}$}{}}
  \newcommand{\un}{\hs\underline{\hspace{5pt}}\hs}
\newcommand{\U}{u_\textit{\tiny N}}
\newcommand{\Upr}{u_{\textit{\tiny N}^\prime}}
\newcommand{\Up}{u_{\textit{\tiny p}^\textit{\tiny e}}}
\newcommand{\Un}{u_{\textit{\tiny p}_\textit{\tiny 1}^{\textit{\tiny e}_\textit{\tiny 1}}}}
\newcommand{\Um}{u_{\textit{\tiny p}_\textit{\tiny m}^{\textit{\tiny e}_\textit{\tiny m}}}}
\newcommand{\Ut}{u_{\text{\tiny 2}^\textit{\tiny a}}}
\newcommand{\At}{A_{\text{\tiny 2}^\textit{\tiny a}}}
\newcommand{\Uh}{u_{\text{\tiny 3}^\textit{\tiny b}}}
\newcommand{\Ah}{A_{\text{\tiny 3}^\textit{\tiny b}}}
\newcommand{\Uprl}{u_{\textit{\tiny N}_1}}
\newcommand{\Uprlm}{u_{\textit{\tiny N}_i}}
\newcommand{\UM}{u_\textit{\tiny M}}
\newcommand{\UMp}{u_{\textit{\tiny M}_1}}
\newcommand{\w}{\omega_\textit{\tiny N}}
\newcommand{\wm}{\omega_\textit{\tiny M}}
\newcommand{\wa}{\omega_{\text{\tiny N}_\textit{\tiny a}}}
\newcommand{\wma}{\omega_{\text{\tiny M}_\textit{\tiny a}}}
\renewcommand{\P}{{\textup{p}}} 
\newcommand{\st}[1]{\substack{#1}}


\title[Diophantine approximation with sums of two squares]{Diophantine approximation with sums of two squares}

\author{Stephan Baier}
\address{Stephan Baier,
Ramakrishna Mission Vivekananda Educational and Research Institute, Department of Mathematics, G. T. Road, PO Belur Math, Howrah, West Bengal 711202, India}
\email{stephanbaier2017@gmail.com}
\author{Habibur Rahaman}
\address{Habibur Rahaman, Indian Institute of Science Education \& Research Kolkata,
Department of Mathematics and Statistics, Mohanpur, West Bengal 741246, India}
\email{hr21rs044@iiserkol.ac.in}

\subjclass[2020]{11J25,11J54,11J71,11L05,11E25}
\keywords{Diophantine approximation, binary quadratic forms, Voronoi summation formula, average Kloosterman sums, Gauss sums}

\maketitle
 
\begin{abstract}
For any given positive definite binary quadratic form $Q$ with integer coefficients, we establish two results on Diophantine approximation with integers represented by $Q$. Firstly, we show that for every irrational number $\alpha$, there exist infinitely many positive integers $n$ represented by $Q$ and satisfying $\|\alpha n\|<n^{-(1/2-\varepsilon)}$ for any fixed but arbitrarily small $\varepsilon>0$. This is an easy consequence of a result by Cook on small fractional parts of diagonal quadratic forms. Secondly, we give a quantitative version with a lower bound of this result when the exponent $1/2-\varepsilon$ is replaced by any fixed $\gamma<3/7$.  To this end, we use the Voronoi summation formula and a bound for bilinear forms with Kloosterman sums to fixed moduli by Kerr, Shparlinski, Wu and Xi.    
\end{abstract}

 \tableofcontents
 
\section{Introduction and main results}
Throughout this article, we denote by $\|z\|$ the distance of $z\in \mathbb{R}$ to the nearest integer. As usual, $\varepsilon$ stands for an arbitrarily small positive number. Moreover, we write
$$
e(x)=e^{2\pi i x} \quad \mbox{and} \quad e_k(x)=e\left(\frac{x}{k}\right) \quad \mbox{for } x\in\mathbb{R} \mbox{ and } k\in \mathbb{N}.
$$

A basic theorem in Diophantine approximation is the Dirichlet approximation theorem, which implies that for every irrational number $\alpha$ there are infinitely many positive integers $n$ such that $\|\alpha n\|<n^{-1}$, where $\|x\|$ is the distance of $x$ from the nearest integer. Beyond this classical result, mathematicians have explored obtaining good approximations when $n$ is restricted to sparse subsets of the set of positive integers. Numerous results exist in this direction. Here we mention only a few of them. For example, when $n$ varies over the set of primes, the best known approximation result states that $\|\alpha n\|<n^{-1/3+\varepsilon}$ holds for infinitely many primes $n$. This is due to Matomäki \cite{Matomaki}. 
In the case when $n$ varies over the set of square-free positive integers, it was established by Heath-Brown \cite{H-B} that $\|\alpha n\|<n^{-2/3+\varepsilon}$ holds for infinitely many squarefree $n\in \mathbb{N}$. In this article, we consider the case where $n$ varies over positive integers which are represented by any given positive definite binary quadratic form with integer coefficients. 

Let 
$$
Q(x,y)=a_1x^2+b_1xy+c_1y^2
$$ 
be a positive definite integral binary quadratic form (PBQF). 
In particular, the discriminant $b_1^2-4a_1c_1$ of $Q$ is negative. We set 
$$
\Delta:=|b_1^2-4a_1c_1|=4a_1c_1-b_1^2.
$$
Let 
\begin{align}\label{def_A_Q}
    \mathcal{A}_Q:=\{n\in\mathbb{N}: Q(x,y)=n \text{ for some } x,y\in\mathbb{Z}\}.
\end{align}
 Let $\alpha$ be any fixed irrational number. We establish the following result.
 \begin{thm}\label{Theorem_1/2_exponent}
     There are infinitely many integers $n\in\mathcal{A}_Q$ such that 
     \begin{align}\label{1/2_exponent}
         \|\alpha n\|<n^{-1/2+\varepsilon},
     \end{align}
     for any fixed but arbitrarily small $\varepsilon>0$.
 \end{thm}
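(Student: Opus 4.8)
The plan is to deduce Theorem~\ref{Theorem_1/2_exponent} from a result of Cook on small fractional parts of \emph{diagonal} quadratic forms, after reducing $Q$ to a diagonal shape along a suitable sublattice. First, completing the square gives $4a_1Q(x,y)=(2a_1x+b_1y)^2+\Delta y^2$; substituting $x=w-b_1v$ and $y=2a_1v$ with $w,v\in\Z$ turns this into
\[
Q(w-b_1v,\,2a_1v)=a_1\bigl(w^2+\Delta v^2\bigr).
\]
Hence every integer of the form $n=a_1(w^2+\Delta v^2)$ with $w,v\in\Z$ lies in $\mathcal{A}_Q$, and it suffices to produce infinitely many such $n$, tending to infinity, for which $\|\alpha n\|$ is suitably small.

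Next I would apply Cook's theorem to the diagonal binary form $\lambda_1w^2+\lambda_2v^2$ with $\lambda_1:=\alpha a_1$ and $\lambda_2:=\alpha a_1\Delta$. Since $\alpha$ is irrational and $a_1,\Delta$ are nonzero integers, both $\lambda_1$ and $\lambda_2$ are irrational, so Cook's result applies and yields, for any fixed $\varepsilon>0$, infinitely many pairs $(w,v)\in\Z^2$ with
\[
\bigl\|\lambda_1w^2+\lambda_2v^2\bigr\|<\bigl(\max(|w|,|v|)\bigr)^{-1+\varepsilon}.
\]
Writing $n=a_1(w^2+\Delta v^2)$, we have $\lambda_1w^2+\lambda_2v^2=\alpha n$, so this says $\|\alpha n\|<(\max(|w|,|v|))^{-1+\varepsilon}$.

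Finally I would convert this into a bound purely in terms of $n$. Since $a_1\ge1$ and $\Delta\ge1$, we have $a_1\max(|w|,|v|)^2\le n\le a_1(1+\Delta)\max(|w|,|v|)^2$. Because there are only finitely many pairs $(w,v)$ with $\max(|w|,|v|)$ below any given bound, the infinitely many pairs furnished by Cook's theorem must have $\max(|w|,|v|)\to\infty$; by the left inequality $n\to\infty$ along these solutions, so infinitely many distinct $n\in\mathcal{A}_Q$ arise. By the right inequality, $(\max(|w|,|v|))^{-1+\varepsilon}\ll_{Q,\varepsilon}n^{-1/2+\varepsilon/2}$, so $\|\alpha n\|<n^{-1/2+\varepsilon}$ for all sufficiently large $n$ in this family, which is precisely the assertion of the theorem. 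The entire content of the argument is the invoked theorem of Cook; the remainder is elementary bookkeeping, and the only point needing care is to verify that the diagonal form $\lambda_1w^2+\lambda_2v^2$ — whose coefficients satisfy the rational relation $\lambda_2=\Delta\lambda_1$ — still meets the hypotheses of Cook's theorem, which it does since $\lambda_1$ is irrational.
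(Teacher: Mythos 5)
Your proposal is correct and follows essentially the same route as the paper: diagonalize $Q$ via the substitution $(x,y)\mapsto(w-b_1v,\,2a_1v)$ (the paper's $x_1=x+b_1y$, $y\mapsto 2a_1y$ is the same change of variables), apply Cook's theorem with $k=s=K=2$ to get the exponent $-1+\varepsilon$ in $N=\max(|w|,|v|)$, and convert to $n^{-1/2+\varepsilon/2}$ using positive definiteness. The only point to state a little more carefully is the infinitude of distinct solutions: Cook's theorem produces one pair per $N$, and one must argue (as the paper does) that a fixed nonzero $n$ cannot satisfy $\|\alpha n\|<CN^{-1+\varepsilon}$ for arbitrarily large $N$ because $\alpha$ is irrational — which is exactly the role irrationality plays in your argument as well.
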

Theorem~\ref{Theorem_1/2_exponent} improves the following theorem of Danicic for the binary quadratic case.
\begin{thm} \cite[Main Theorem]{Danicic}\label{Danicic}
    Let 
    \begin{align*}
        Q(x_1, x_2, \ldots, x_s)=\sum_{i=1}^{s}\sum_{j=1}^{s}\alpha_{ij}x_ix_j, \ \ \ \ (\alpha_{ij}=\alpha_{ji}),
    \end{align*}
    be a real quadratic form in $s$ variables $x_1, x_2,\ldots,x_s$. If $N>1$ and $\varepsilon>0$, then there exist integers $x_1, x_2, \ldots, x_s$, not all zero, satisfying 
    \begin{align*}
        \max\limits_{1\le i\le s}  |x_i|\leq N \ \  \text{ and }\ \ \|Q(x_1,x_2,\ldots,x_s)\|<C(\varepsilon, s) N^{-s/(s+1) +\varepsilon}.
    \end{align*}
\end{thm}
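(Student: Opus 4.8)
The plan is to prove this by the Hardy--Littlewood circle method in the guise of Weyl's exponential-sum method, extending Heilbronn's treatment of the one-variable case $Q(x)=\alpha x^2$, which is precisely $s=1$ where $s/(s+1)=1/2$. Write $\delta:=C(\varepsilon,s)N^{-s/(s+1)+\varepsilon}$, put $N':=\lfloor N\rfloor$, and let $B:=\{-N',\dots,N'\}^s$. Since replacing each $\alpha_{ij}$ by $\alpha_{ij}+m_{ij}$ with $m_{ij}\in\Z$ alters $Q(\mathbf x)$ only by an integer and hence leaves $\|Q(\mathbf x)\|$ unchanged, I may assume $|\alpha_{ij}|\le 1/2$. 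It then suffices to prove that
\[
\mathcal N(\delta):=\#\{\mathbf x\in B:\ \mathbf x\neq\mathbf 0,\ \|Q(\mathbf x)\|<\delta\}\ \ge\ 1,
\]
since any such $\mathbf x$ supplies the required integers with $\max_i|x_i|\le N$.

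To access $\mathcal N(\delta)$ analytically I would choose a nonnegative periodic minorant $F$ of the indicator of $\{\,\|t\|<\delta\,\}$ whose Fourier coefficients are supported on $|h|\le H$ with $H\asymp\delta^{-1}$ and satisfy $\hat F(0)\gg\delta$ and $|\hat F(h)|\le\hat F(0)$ (a Fej\'er or Beurling--Selberg type kernel). Writing $\mathcal N^{*}(\delta)=\mathcal N(\delta)+1$ for the same count with $\mathbf 0$ included, one has
\[
\mathcal N^{*}(\delta)\ \ge\ \sum_{\mathbf x\in B}F\bigl(Q(\mathbf x)\bigr)=\sum_{|h|\le H}\hat F(h)\,W(h),\qquad W(h):=\sum_{\mathbf x\in B}e\bigl(hQ(\mathbf x)\bigr).
\]
The term $h=0$ gives the main contribution $\hat F(0)(2N'+1)^s\gg\delta N^{s}\gg N^{s/(s+1)}$, which tends to infinity. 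Hence it is enough to show that the error $\sum_{1\le|h|\le H}|\hat F(h)|\,|W(h)|$ is at most half of this, for then $\mathcal N^{*}(\delta)>1$, so $\mathcal N^{*}(\delta)\ge 2$ and $\mathcal N(\delta)\ge 1$.

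Because $Q$ is quadratic, a single Weyl differencing linearises it. With difference vector $\mathbf d$ and $L_k(\mathbf d):=\sum_{j}\alpha_{kj}d_j$ the $k$-th component of the gradient form, the inner sum factors over the coordinates and yields
\[
|W(h)|^{2}=\sum_{\mathbf d}e\bigl(-hQ(\mathbf d)\bigr)\prod_{k=1}^{s}\sum_{x_k}e\bigl(2h\,L_k(\mathbf d)\,x_k\bigr)\ \ll\ \sum_{\mathbf d\in B}\ \prod_{k=1}^{s}\min\Bigl(N,\ \|2h\,L_k(\mathbf d)\|^{-1}\Bigr).
\]
Thus $|W(h)|^{2}$ is controlled by the number of difference vectors $\mathbf d$ for which the entire gradient system $2h\,\nabla Q(\mathbf d)$ lies close to the integer lattice.

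The decisive step is to bound $\sum_{1\le h\le H}\sum_{\mathbf d\in B}\prod_{k}\min\bigl(N,\|2hL_k(\mathbf d)\|^{-1}\bigr)$ uniformly in the arbitrary real coefficients $\alpha_{ij}$. Here I would invoke a Davenport-type estimate for such multilinear sums with minima: by Dirichlet's theorem the symmetric matrix $(\alpha_{ij})$ admits a simultaneous rational approximation with a common denominator $q$, and the count of admissible $\mathbf d$ is then governed by the lattice geometry fixed by $q$ and $N$. Splitting according to the size of $q$ separates a generic regime, in which $W(h)$ enjoys essentially square-root cancellation, from a near-rational regime; balancing these two contributions against the main term $\delta N^{s}$ and optimising the single free parameter $H\asymp\delta^{-1}$ is exactly what forces the exponent $s/(s+1)$ and pins down the constant $C(\varepsilon,s)$. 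I expect this estimation to be the main obstacle: one must make the lattice-point count for the gradient system uniform over all real coefficient matrices and carry out the optimisation over $q$ and $H$ so that the error provably drops below $\delta N^{s}$. The factor $\varepsilon$ in the exponent is what absorbs the divisor and logarithmic losses incurred when passing from the pair $(h,\mathbf d)$ to the products $h\,d_j$ and applying the rational approximation to the coefficients.
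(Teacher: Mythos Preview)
The paper does not prove this statement at all: Theorem~\ref{Danicic} is quoted as Danicic's main theorem from \cite{Danicic}, and the paper explicitly records that ``Danicic \cite{Danicic} used lattices to establish Theorem~\ref{Danicic}''. So there is no proof in the paper to compare your attempt against; at most one can compare your outline with the method Danicic actually used.

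Your route is genuinely different from Danicic's. He works with the geometry of numbers: the problem is recast as finding a nonzero lattice point in a suitable convex body, and Minkowski-type arguments produce the exponent $s/(s+1)$ directly, uniformly in the real coefficient matrix. You instead propose the Heilbronn/Weyl exponential-sum approach---minorant, main term $\hat F(0)(2N'+1)^s$, Weyl differencing to linearise the quadratic form, then a lattice-point count for the gradient system. This is closer in spirit to Cook's proof of Theorem~\ref{thm_Cook} for diagonal forms (the paper notes that ``Cook \cite{Cook} used bounds for quadratic Weyl sums''). The advantage of the lattice method is that it handles arbitrary real symmetric $(\alpha_{ij})$ in one stroke and gives the exponent without the $\varepsilon$-losses you anticipate; the advantage of your approach, when it can be pushed through, is that it tends to give quantitative information (counts of solutions) rather than mere existence.

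That said, what you have written is a plan, not a proof. The entire content of the theorem is hidden in the sentence ``I would invoke a Davenport-type estimate for such multilinear sums with minima'', and you do not state which estimate, nor verify that it applies uniformly for an \emph{arbitrary} real symmetric matrix $(\alpha_{ij})$ (possibly singular, possibly with rationally dependent rows). After Weyl differencing you need to control
\[
\sum_{1\le h\le H}\ \sum_{\mathbf d\in B}\ \prod_{k=1}^{s}\min\bigl(N,\|2hL_k(\mathbf d)\|^{-1}\bigr),
\]
and the standard Davenport lemma handles a \emph{single} linear form, not a system of $s$ coupled forms $L_1,\dots,L_s$ with a shared symmetric coefficient matrix; the passage from one to $s$ is exactly where the simultaneous Diophantine approximation of the $\alpha_{ij}$ and the lattice geometry enter, and you have not supplied this. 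Nor have you shown the optimisation that actually yields $s/(s+1)$: you assert that ``balancing \dots\ is exactly what forces the exponent'', but no inequality is written down. Until that estimate is stated and proved, the argument has a genuine gap at its decisive step.
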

For the case $s=2$, taking $n=Q(x_1, x_2)$, this gives the exponent $1/3$ in \eqref{1/2_exponent} instead of $1/2$. Though $1/2$ is a strong improvement over $1/3$, Theorem~\ref{Theorem_1/2_exponent} can be deduced easily from the following theorem of Cook. It seems this has not been observed in the literature so far. 

 \begin{thm}\cite[Theorem~1]{Cook}\label{thm_Cook}
     Let $k\geq 2$ and $K=2^{k-1}.$ Let $f(\vec{x})=\theta_1x_1^k+\theta_2x_2^k+\cdots+\theta_sx_s^k$ be an additive form of degree $k$, in $s$ variables with $1\leq s\leq K$, where $\theta_i$ are any real numbers and $\vec{x}=(x_1, x_2,\cdots,x_s).$ Then for every $\epsilon>0, N>1$, there are integers $x_1, x_2, \ldots, x_s$, not all zero, such that 
     \begin{align*}
         \max\limits_{1\le i\le s} |x_i|\leq N\  \text{ \ and \ } \ \|f(\vec{x})\|<C(\varepsilon, k) N^{-s/K+\varepsilon}.
     \end{align*}
 \end{thm}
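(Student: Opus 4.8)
The plan is to prove the theorem by a Davenport--Heilbronn type circle method whose only substantial input is Weyl's inequality; the hypothesis $s\le K=2^{k-1}$ is precisely what places us in the regime where Weyl's inequality is the efficient tool. Put $\delta=N^{-s/K+\varepsilon}$ and $H=\lfloor\delta^{-1}\rfloor$, and for $h\in\Z$ write $S_i(h)=\sum_{1\le x\le N}e(h\theta_i x^k)$, so that $\sum_{\vec x}e\bigl(hf(\vec x)\bigr)=\prod_{i=1}^s S_i(h)$ when $\vec x$ runs over $\{1,\dots,N\}^s$. To detect small values I would fix a nonnegative trigonometric minorant $\psi$ of the indicator of $\{t:\|t\|<\delta\}$ whose Fourier coefficients are supported in $[-H,H]$ and satisfy $\widehat\psi(0)\gg\delta$ and $|\widehat\psi(h)|\ll\delta$ (a Beurling--Selberg minorant). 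Then
\begin{align*}
R:=\sum_{\vec x\in\{1,\dots,N\}^s}\psi\bigl(f(\vec x)\bigr)=\sum_{|h|\le H}\widehat\psi(h)\prod_{i=1}^s S_i(h)\ge\widehat\psi(0)N^s-\delta\sum_{1\le|h|\le H}\prod_{i=1}^s|S_i(h)|.
\end{align*}
Since every $\vec x\in\{1,\dots,N\}^s$ is a nonzero integer vector with $\max_i|x_i|\le N$, it suffices to show $R\ge1$; as the main term is $\gg\delta N^s=N^{s-s/K+\varepsilon}\to\infty$, the whole task reduces to bounding the $h\ne0$ contribution.

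The engine is Weyl's inequality: approximating $h\theta_i$ by a reduced fraction $a/q$ with $q\le N^{k-1}$ and $|h\theta_i-a/q|\le (qN^{k-1})^{-1}$ (Dirichlet's theorem), one has $|S_i(h)|\ll N^{1+\varepsilon}\bigl(q^{-1}+N^{-1}+qN^{-k}\bigr)^{1/K}$, the exponent $1/K=2^{1-k}$ arising from the $k-1$ successive differencings that reduce the phase $x^k$ to a linear one. Call $h$ \emph{minor} for $i$ if the associated $q$ lies in $[N,N^{k-1}]$; then the bracket is $\ll N^{-1}$ and $|S_i(h)|\ll N^{1-1/K+\varepsilon}$. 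In the principal case, where every $1\le h\le H$ is minor for every $i$, we get $\prod_{i=1}^s|S_i(h)|\ll N^{s(1-1/K)+\varepsilon'}$, and since $\delta H\asymp1$ the $h\ne0$ contribution is $\ll\delta\cdot H\cdot N^{s(1-1/K)+\varepsilon'}\asymp N^{s-s/K+\varepsilon'}$. Choosing the Weyl loss $\varepsilon'$ smaller than the target $\varepsilon$ makes this $o(\delta N^s)$, so $R\gg\delta N^s\ge1$ and the theorem follows in this case. Observe that all $s$ factors are used and that summing the per-variable saving $1/K$ over the $s$ variables is exactly what yields the exponent $s/K$; the constraint $s\le K$ keeps $\delta\ge N^{-1+\varepsilon}$, the range in which the main term dominates.

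The remaining, and genuinely harder, case is when this minor-arc hypothesis fails: some $1\le h\le H$ and some index $i$ have $h\theta_i$ within $O(N^{-(k-1)})$ of a rational $a/q$ with $q<N$. Then $S_i(h)$ may be as large as $\asymp N$ --- for instance when $\theta_i$ is rational with a tiny denominator --- so the bound above collapses and the Fourier lower bound for $R$ becomes vacuous. The saving grace is that a good rational approximation to $\theta_i$ forces $\|\theta_i x^k\|$ to be small for many $x\le N$, so admissible $\vec x$ are in fact abundant and should be produced directly. Concretely, I would simultaneously approximate the offending coordinates by fractions with a controlled common denominator $q$, solve the congruence $\sum_i b_i x_i^k\equiv0\pmod q$ for a nonzero $\vec x$ with $|x_i|\le X$ by a pigeonhole/counting argument (feasible once $X^{\,s}$ exceeds $q$), and bound the truncation error $\sum_i|\theta_i-a_i/q_i|\,X^k$ below $\delta$. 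Balancing $X$, the denominator $q$, and the approximation quality against the target $\delta=N^{-s/K+\varepsilon}$ is the main obstacle: one must check that whenever the minor-arc estimate fails, the forced approximations are sharp enough to drive the error under $\delta$ while still leaving a nonzero congruence solution in the box. Combining the two branches yields a nonzero $\vec x$ with $\|f(\vec x)\|<\delta$ in every case, and the constant $C(\varepsilon,k)$ is read off from the implied constants in Weyl's inequality and in the minorant construction.
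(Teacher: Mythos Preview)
The paper does not prove this statement: Theorem~\ref{thm_Cook} is quoted verbatim from Cook~\cite{Cook} and used only as a black box to derive Theorem~\ref{Theorem_1/2_exponent}. There is therefore no proof in the paper to compare your argument against.

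On its own merits, your sketch follows the Davenport--Heilbronn/Weyl route that is indeed the natural one, and the minor-arc portion is correct: the $h=0$ term of size $\asymp\delta N^{s}$ dominates $\delta\sum_{1\le |h|\le H}\prod_i|S_i(h)|$ as soon as each factor obeys $|S_i(h)|\ll N^{1-1/K+\eta}$ with $s\eta<\varepsilon$. The major-arc branch, however, has a real gap rather than a merely omitted routine step. Failure of the minor-arc bound only tells you that \emph{some} $h_0\le H$ and \emph{some} index $i$ have $h_0\theta_i$ close to a reduced fraction $a/q$ with small $q$; it says nothing about the remaining $\theta_j$, and even for the offending index it is $h_0\theta_i$ that is well-approximable, not $\theta_i$ itself --- the extraneous factor $h_0$ can be as large as $H=N^{s/K-\varepsilon}$, which blocks the obvious construction of an $x$ with $\|\theta_i x^{k}\|$ small. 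Your proposal to ``simultaneously approximate the offending coordinates'' and solve a congruence $\sum_i b_i x_i^{k}\equiv 0\pmod q$ therefore does not apply as stated: with only one coordinate known to be rational-like, setting the other $x_j=0$ reduces to a one-variable problem whose target exponent is $s/K$ rather than $1/K$, which is strictly harder and which you have not established. This is exactly the delicate step in Cook's argument, and carrying it out in full is where the substantive work lies.
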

\hspace{-0.5cm} {\bf Proof of Theorem 1.} Let $\alpha$ be any given irrational number. We have 
 \begin{align*}
     \alpha Q(x,2a_1y)&=\alpha (a_1x^2+ 2a_1b_1xy+4a_1^2c_1y^2)\\
     &=\alpha a_1 \left((x+ b_1 y)^2+ (4a_1c_1-b_1^2)y^2\right)\\
     &=f(x_1,y_1), 
 \end{align*}
 where $f(x_1,y_1)=\theta_1 x_1^2+\theta_2y_1^2,$ with $\theta_1:=\alpha a_1, \theta_2:=\alpha(4a_1c_1-b_1^2), x_1:=x+b_1y$ and $y_1:=y$. Taking $k=s=K=2$ in Theorem~\ref{thm_Cook}, for any $N>1$ and $\varepsilon>0$, the inequality 
 \begin{align*}
     \|\alpha Q(x, 2a_1y)\|= \|f(x_1, y_1)\|< D_1(\varepsilon,Q)N^{-1+\varepsilon}
 \end{align*}
has a solution $(x,y)\in \mathbb{Z}^2$, where $x, y\le N$ and $D_1(\varepsilon,Q)>0$ is a suitable constant depending only on $\varepsilon$ and $Q$. Now taking $n=Q(x, 2a_1y)$ gives 
 \begin{align*}
     \|\alpha n\|<D_1(\varepsilon,Q)N^{-1+\varepsilon}\le  D_2(\varepsilon,Q)n^{-1/2+\varepsilon/2}
 \end{align*}
for another constant $D_2(\varepsilon,Q)>0$ since $Q$ is positive definite. Note that $\|\alpha n\|<D_1(\varepsilon,Q)N^{-1+\varepsilon}$ does not hold for arbitrarily large $N$ because $\alpha$ is irrational. Hence, there is an infinite sequence of natural numbers $n$ such that
$$
\|\alpha n\|<D_2(\varepsilon,Q)n^{-1/2+\varepsilon/2},
$$
and the result of Theorem 1 follows. \qed
 
\vspace{0.2cm}
Danicic \cite{Danicic} used lattices to establish Theorem \ref{Danicic}, and Cook \cite{Cook} used bounds for quadratic Weyl sums to establish Theorem \ref{thm_Cook}. Both methods give only the existence of infinitely many good approximations but no bounds on the number of them. The main part of this article consists of establishing a lower bound which holds for certain ranges of $n$ depending on good rational approximations of $\alpha$. This comes at the cost of a weaker exponent of $3/7$ in place of $1/2$, which still beats the exponent $1/3$ coming from Theorem \ref{Danicic}. Our method uses the Voronoi summation formula and bounds for bilinear sums with Kloosterman sums to a fixed modulus due to Kerr, Shparlinski, Wu and Xi \cite{Kerr}. We will deduce our result from the following asymptotic formula for the number of representations by $Q$ in unions of residue classes, which is of independent interest. 

 \begin{thm}\label{main_thm} Suppose that $q\in \mathbb{N}$ and $a\in \mathbb{Z}$ such that $(2\Delta a,q)=1$. Let $\varepsilon>0$ and set $L:=q^\beta$ and  $X:=Lq=q^{1+\beta}$ with $2/5+3\varepsilon\le \beta<1$. Let $w: \mathbb{R}\rightarrow \mathbb{R}_{\ge 0}$ be a smooth function which is compactly supported in $[X, 2X]$ with $w(x)=1$ in $[5X/4,7X/4]$ such that for any integer $j\geq 1$, the $j^{th}$ derivative satisfies
    \begin{align*}
        w^{(j)}(x)\ll_j X^{-j}.
    \end{align*}
    Define \begin{equation} \label{Sdef}
    S:=\sum\limits_{\substack{|b|\le L\\(b,q)=1}} \sum\limits_{\substack{n\in\mathbb{Z}\\ na\equiv b\bmod{q}}} r_Q(n)w(n), 
\end{equation}
where $r_Q(n)$ is the number of integral representations of $n$ by the PBQF $Q(x,y)$.
Then we have 
\begin{align*}
    S=  \frac{4\pi \varphi(q)L}{\Delta^{1/2}q^2}\prod_{\substack{p|q\\p \text{ prime }}}\left(1-\frac{G_Q(p,1)}{p^2}\right) \int\limits_{0}^{\infty}w(x)dx \cdot\left(1+o(1)\right)
\end{align*}
as $q\rightarrow\infty$, where 
$$
G_Q(q,h):=\sum_{x,y\bmod q}e_q(hQ(x,y)).
$$
Moreover, $S$ satisfies the lower and upper bounds 
$$
\frac{q^{2\beta}}{(\log\log q)^2}\ll S\ll q^{2\beta}.
$$
\end{thm}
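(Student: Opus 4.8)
The plan is to open $r_Q(n)=\#\{(x,y)\in\Z^2:Q(x,y)=n\}$, detect the congruence $na\equiv b\ (q)$ by additive characters, and apply the Voronoi summation formula for the theta series of $Q$ (equivalently, two-dimensional Poisson summation on $\Z^2$). Writing $\mathbf 1[na\equiv b\ (\mathrm{mod}\ q)]=\tfrac1q\sum_{h\bmod q}e_q(h(na-b))$ gives
\begin{equation*}
S=\frac1q\sum_{h\bmod q}B(h)\,T(h),\qquad B(h)=\sum_{\substack{|b|\le L\\(b,q)=1}}e_q(-hb),\quad T(h)=\sum_{(x,y)\in\Z^2}g(x,y)\,e_q\big(haQ(x,y)\big),
\end{equation*}
where $g(x,y):=w(Q(x,y))$. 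Splitting $T(h)$ into residue classes modulo $q$ and applying Poisson summation in each class yields $T(h)=q^{-2}\sum_{k,l\in\Z}\widehat g(k/q,l/q)\,H(h;k,l)$ with $H(h;k,l)=\sum_{r,s\bmod q}e_q(haQ(r,s)+kr+ls)$. Since $w$ is smooth with $w^{(j)}\ll_j X^{-j}$ and is supported where $Q(x,y)\asymp X$, repeated integration by parts shows $\widehat g(k/q,l/q)$ is negligible unless $|k|,|l|\ll q^{(1-\beta)/2+\varepsilon}$, so the dual sum is short.

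\textbf{Main term.} The full zero-frequency contribution (the terms $k=l=0$, summed over all $h$) is $q^{-3}\widehat g(0,0)\sum_{h\bmod q}B(h)G_Q(q,ha)$, and Fourier inversion gives the exact identity $\sum_{h\bmod q}B(h)G_Q(q,ha)=q\sum_{|b|\le L,(b,q)=1}\rho_Q(\overline a b,q)$, where $\rho_Q(m,q)=\#\{(x,y)\bmod q:Q(x,y)\equiv m\ (q)\}$ and $\overline a$ is the inverse of $a$ modulo $q$. Since $(2\Delta a,q)=1$ and $(b,q)=1$, the residue $\overline a b$ is a unit; completing the square and using $G_Q(p,h)=\big(\tfrac{-\Delta}{p}\big)p$ for $p\mid q$, $(h,p)=1$, one finds the multiplicative, $m$-independent value $\rho_Q(m,q)=q\prod_{p\mid q}\big(1-G_Q(p,1)/p^2\big)$ for $(m,q)=1$. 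With $\widehat g(0,0)=\tfrac{2\pi}{\Delta^{1/2}}\int_0^\infty w$ and $\#\{|b|\le L:(b,q)=1\}=\tfrac{2L\varphi(q)}q+O(q^\varepsilon)$, this reproduces exactly the claimed leading term. For the size bounds, write the arithmetic factor as $\tfrac{\varphi(q)}q\prod_{p\mid q}(1-G_Q(p,1)/p^2)=\prod_{p\mid q}(1-\tfrac1p)\big(1-\big(\tfrac{-\Delta}{p}\big)\tfrac1p\big)$; each factor lies in $(0,1)$, the product is $\le1$ and, by Mertens' theorem, $\gg(\log\log q)^{-2}$. Since $\tfrac{\varphi(q)L}{q^2}\int_0^\infty w\asymp q^{2\beta}\cdot\tfrac{\varphi(q)}q$, the stated bounds $q^{2\beta}(\log\log q)^{-2}\ll S\ll q^{2\beta}$ follow once the error term is shown to be of smaller order.

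\textbf{Error term and the main obstacle.} For $(h,q)=1$ the complete sum evaluates, by completing the square, to $H(h;k,l)=G_Q(q,a)\,e_q(-\overline h\,\psi(k,l))$, where $\psi$ is the (integral, positive definite) dual form of $Q$ normalized by a unit modulo $q$, $\overline h$ is the inverse of $h$ modulo $q$, and $|G_Q(q,a)|=q$. Summing against $B(h)$ turns the $h$-sum into Kloosterman sums, so the dominant part of the error is the bilinear form
\begin{equation*}
E\ \asymp\ \frac{G_Q(q,a)}{q^3}\sum_{m}\gamma_m\sum_{\substack{|b|\le L\\(b,q)=1}}\mathrm{Kl}(b,m;q),\qquad \mathrm{Kl}(b,m;q)=\sum_{\substack{h\bmod q\\(h,q)=1}}e_q(hb+\overline h m),
\end{equation*}
where $m=\psi(k,l)$ runs over $1\le m\ll q^{1-\beta+\varepsilon}$ and $\gamma_m=\sum_{\psi(k,l)=m}\widehat g(k/q,l/q)$ satisfies $|\gamma_m|\ll Xq^{\varepsilon}$. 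The crux is to bound this bilinear form in the ranges $|b|\le q^{\beta}$ and $m\ll q^{1-\beta}$ beyond what Weil's bound gives termwise: the latter only yields $E\ll q^{1/2+\beta}$, which beats the main term $\asymp q^{2\beta}$ only for $\beta>1/2$. To descend to $\beta\ge2/5$ I would invoke the bounds of Kerr, Shparlinski, Wu and Xi \cite{Kerr} for bilinear forms with Kloosterman sums to the fixed modulus $q$; optimizing their estimate against the above ranges and coefficient sizes is exactly what forces the threshold $\beta\ge2/5+3\varepsilon$ and yields a power saving $E\ll q^{2\beta-\delta}$ for some $\delta=\delta(\varepsilon)>0$. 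Two technical points remain: the non-unit frequencies $1<(h,q)<q$, which I would organize by $d=(h,q)$ and treat by the same method with reduced modulus $q/d$ (these are of lower order, and note that the $h=0$ term contributes nothing to the dual sum since $H(0;k,l)=0$ for $(k,l)\neq(0,0)$), and the $\ell^1$/$\ell^2$ control of the coefficients $\gamma_m$ furnished by the smoothness of $w$. Combining the main term with $E=o\big(q^{2\beta}(\log\log q)^{-2}\big)$ gives both the asymptotic formula and the displayed bounds.
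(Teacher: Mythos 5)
Your proposal is correct and follows essentially the same route as the paper: a summation formula for $r_Q$ (your two-dimensional Poisson summation is exactly the Voronoi formula of Jutila that the paper cites), the main term extracted from the zero frequency, and the Kerr--Shparlinski--Wu--Xi bilinear bounds for Kloosterman sums to a fixed modulus to control the dual sum, which is precisely what produces the threshold $\beta\ge 2/5+3\varepsilon$. The differences are organizational. You keep the modulus $q$ throughout and propose to handle the imprimitive frequencies $(h,q)=d>1$ afterwards by reducing the modulus, whereas the paper decomposes $\sum_{g\bmod q}$ over divisors $k\mid q$ at the outset and applies the Voronoi formula modulo each $k$; and you evaluate the main term by Fourier inversion to the local densities $\rho_Q(m,q)$ together with Hensel lifting, whereas the paper uses the Ramanujan sum evaluation $c_k(\overline a b)=\mu(k)$ and the multiplicativity of $G_Q(k,1)$ --- both yield the same Euler product $\prod_{p\mid q}\left(1-G_Q(p,1)/p^2\right)$. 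One small imprecision: the individual factors $1-\bigl(\tfrac{-\Delta}{p}\bigr)p^{-1}$ can exceed $1$; it is the product of the two factors attached to each prime that lies in $(0,1)$, and the bounds $\ll 1$ and $\gg(\log\log q)^{-2}$ still follow. The only step you leave implicit is the actual optimization of the KSWX estimates against the ranges $M\asymp q^{1-\beta}$, $N\asymp q^{\beta}$ and the coefficient norms; carrying it out with the second of their three bounds gives $T_2\ll q^{\varepsilon}\left(X^{1/2}L^{1/4}+X^{1/2}q^{-1/2}L+q^{1/2}L^{1/2}\right)$, whose first term is what forces $\beta>2/5$, confirming your prediction and the power saving $E\ll q^{2\beta-\varepsilon}$ needed for both the asymptotic and the stated upper and lower bounds.
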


We will use Theorem \ref{main_thm} above to deduce the following lower bound for the number of $n$'s represented by $Q$ satisfying $\|n\alpha\|<C_1n^{-\gamma}$ for some constant $C_1$ and exponent $\gamma$ in a suitable range. This lower bound is close to the expected bound. 

\begin{co} \label{main_thm 2} Suppose that $q\in \mathbb{N}$ and $a\in \mathbb{Z}$ satisfy $(2\Delta a,q)=1$ and the inequality  
$$
\left|\alpha-\frac{a}{q}\right|< \frac{24\Delta^2}{q^2}.
$$
Fix $\varepsilon,\eta>0$ and assume that $2/5+3\varepsilon\le \beta<1$. Set $X:=q^{1+\beta}$ and $\gamma:=
(1-\beta)/(1+\beta)$. Then
$$
\sharp \{n\in \mathcal{A}_Q: n\le 2X, \ \|n\alpha\|<C_1n^{-\gamma}\} \gg_{\varepsilon,\eta} X^{1-\gamma-C_2/(\log\log X)}   
$$
for suitable constants $C_1,C_2>0$. Moreover, there are infinitely many rational numbers $a/q$ satisfying the conditions above. 
\end{co}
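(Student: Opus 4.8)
The plan is to read the lower bound off Theorem~\ref{main_thm}. Fix once and for all an admissible weight $w$ as in that theorem. Unwinding the definition of $S$: since $(a,q)=1$ and $L=q^{\beta}<q/2$ for $q$ large, each integer $n$ admits at most one $b$ with $|b|\le L$ and $b\equiv na\bmod q$, and such a $b$ is coprime to $q$ exactly when $(n,q)=1$; since $w$ is supported in $[X,2X]$, this gives
\[
S=\sum_{n\in\mathcal{N}}r_Q(n)\,w(n)\ \le\ \sum_{n\in\mathcal{N}}r_Q(n),\qquad
\mathcal{N}:=\bigl\{\,n\in\mathbb{Z}:\ X\le n\le 2X,\ (n,q)=1,\ \|na/q\|\le q^{\beta-1},\ r_Q(n)\ge1\,\bigr\}.
\]
It then suffices to prove (i) that $\mathcal{N}$ is contained in the set counted in the corollary, and (ii) that $\#\mathcal{N}\gg X^{1-\gamma-C_2/\log\log X}$.

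For (i): if $n\in\mathcal{N}$ then $r_Q(n)\ge1$ forces $n\in\mathcal{A}_Q$, and $n\le 2X$; moreover, by $\|u+v\|\le\|u\|+|v|$, the hypothesis $|\alpha-a/q|<24\Delta^2/q^2$, and $n\le 2X=2q^{1+\beta}$,
\[
\|n\alpha\|\ \le\ \|na/q\|+n\bigl|\alpha-\tfrac{a}{q}\bigr|\ \le\ q^{\beta-1}+2q^{1+\beta}\cdot\frac{24\Delta^2}{q^2}\ =\ (1+48\Delta^2)\,q^{\beta-1}.
\]
Since $(1+\beta)\gamma=1-\beta$ one has $q^{\beta-1}=2^{\gamma}(2X)^{-\gamma}<2(2X)^{-\gamma}\le 2\,n^{-\gamma}$ (using $\gamma<1$ and $n\le 2X$), so $\|n\alpha\|<2(1+48\Delta^2)\,n^{-\gamma}<C_1 n^{-\gamma}$ for a suitable constant $C_1=C_1(Q)$. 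This proves (i).

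For (ii): combine $S\le\bigl(\max_{n\le 2X}r_Q(n)\bigr)\#\mathcal{N}$ with the lower bound $S\gg q^{2\beta}(\log\log q)^{-2}$ of Theorem~\ref{main_thm}. One has $r_Q(n)\ll d(n)$, since $r_Q(n)$ is at most the number of representations of $n$ by all classes of positive definite forms of discriminant $-\Delta$ and this total is $O(d(n))$; hence, by the maximal order of the divisor function, $\max_{n\le 2X}r_Q(n)\le X^{C_3/\log\log X}$ for $X$ large. As $q=X^{1/(1+\beta)}$ we get $q^{2\beta}=X^{2\beta/(1+\beta)}=X^{1-\gamma}$, while $(\log\log q)^2\le(\log\log X)^2\le X^{C_4/\log\log X}$ for $X$ large; therefore
\[
\#\mathcal{N}\ \gg\ X^{1-\gamma}(\log\log X)^{-2}X^{-C_3/\log\log X}\ \gg\ X^{\,1-\gamma-C_2/\log\log X},\qquad C_2:=C_3+C_4,
\]
and since the quantity counted in the corollary is $\ge\#\mathcal{N}$, the claimed bound follows, the implied constant inheriting the $\varepsilon$-dependence of Theorem~\ref{main_thm}. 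The single delicate point in the whole deduction is this exponent $C_2/\log\log X$: it is what forces one to use the pointwise bound $r_Q(n)\ll d(n)$ and the maximal order of $d$, rather than the softer $r_Q(n)\ll_\delta n^\delta$, which would only give $X^{1-\gamma-\delta}$ for each fixed $\delta>0$.

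To finish, one must exhibit infinitely many admissible $a/q$. Put $m:=2\Delta$, let $p_n/q_n$ be the convergents of $\alpha$, and consider the fractions $\frac{p_{n+1}+sp_n}{q_{n+1}+sq_n}$ for $0\le s\le m-1$. Each is in lowest terms, since $(p_{n+1}+sp_n)q_n-p_n(q_{n+1}+sq_n)=p_{n+1}q_n-p_nq_{n+1}=\pm1$. Writing $\delta_k:=q_k\alpha-p_k$, so that $\delta_n,\delta_{n+1}$ have opposite signs with $|\delta_{n+1}|<|\delta_n|<q_{n+1}^{-1}$, one gets for $1\le s\le m-1$, with $q':=q_{n+1}+sq_n<(s+1)q_{n+1}$,
\[
\Bigl|\alpha-\tfrac{p_{n+1}+sp_n}{q'}\Bigr|=\frac{|\delta_{n+1}+s\delta_n|}{q'}<\frac{s\,|\delta_n|}{q'}<\frac{s}{q_{n+1}q'}<\frac{s(s+1)}{q'^2}\le\frac{4\Delta^2}{q'^2}<\frac{24\Delta^2}{q'^2},
\]
while $s=0$ gives $|\alpha-p_{n+1}/q_{n+1}|<q_{n+1}^{-2}$. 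Since $\gcd(q_n,q_{n+1})=1$, for each prime $p\mid m$ the condition $p\mid q_{n+1}+sq_n$ is either unsolvable (when $p\mid q_n$) or singles out a single residue class of $s$ modulo $p$, so by the Chinese Remainder Theorem at least $\varphi(m)\ge1$ of the values $s\in\{0,\dots,m-1\}$ satisfy $\gcd(q_{n+1}+sq_n,2\Delta)=1$. Picking one such $s$ for each $n$ yields infinitely many reduced fractions $a/q$ with $(2\Delta a,q)=1$ and $|\alpha-a/q|<24\Delta^2/q^2$, their denominators $q_{n+1}+sq_n\ge q_{n+1}$ being unbounded. The genuine work lies entirely in Theorem~\ref{main_thm}; everything above it is routine apart from the single delicacy already noted.
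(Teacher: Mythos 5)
Your proposal is correct, and its core deduction from Theorem~\ref{main_thm} is essentially the paper's: convert membership in the residue classes $na\equiv b\pmod q$, $|b|\le L$, into the Diophantine condition $\|n\alpha\|<C_1n^{-\gamma}$ via the triangle inequality, then divide the lower bound for $S$ by $\max_{n\le 2X}r_Q(n)=X^{O(1/\log\log X)}$. You differ in two places. First, for the existence of infinitely many admissible $a/q$, the paper proves a separate Dirichlet-type result (Lemma~\ref{lem_Dirichlet_approx}) by an explicit congruence construction $r=1+kbd$; your argument via the intermediate fractions $(p_{n+1}+sp_n)/(q_{n+1}+sq_n)$ of consecutive convergents is cleaner, checks out (opposite signs of $\delta_n,\delta_{n+1}$ give $|\delta_{n+1}+s\delta_n|<s|\delta_n|$, and the sieve over $s$ modulo $2\Delta$ leaves at least $\varphi(2\Delta)\ge 1$ admissible $s$), and even yields the sharper constant $4\Delta^2$ in place of $24\Delta^2$. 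Second, for the bound on $r_Q(n)$ the paper's Lemma~\ref{lem_bound_r_Q(n)} counts ideals of norm $4a_1n$ in $\mathbb{Q}(\sqrt{-\Delta})$ to get $r_Q(n)\le 6\,d(4a_1n)^2$; your appeal to the total representation number over all classes of discriminant $-\Delta$ being $O(d(n))$ is slightly informal when $-\Delta$ is not fundamental (the clean identity $\sum_{[Q']}r_{Q'}(n)=w\sum_{d\mid n}\chi(d)$ needs care there), but this is immaterial since even $d(n)^2$ gives the required $n^{O(1/\log\log n)}$, and your observation that the softer $r_Q(n)\ll_\delta n^{\delta}$ would \emph{not} suffice is exactly the right point of delicacy. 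One cosmetic remark: when you write $S\le\sum_{n\in\mathcal N}r_Q(n)$ you are implicitly normalising $0\le w\le 1$, which is a permissible choice of the weight but worth stating.
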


\begin{re} We note that $(1-2/5)/(1+2/5)=3/7$, and thus Corollary \ref{main_thm 2} recovers Theorem 1 with a weaker exponent of $3/7$ in place of $1/2$, but coming with a lower bound for the number of $n$'s satisfying $\|n\alpha\|<n^{-3/7+\varepsilon}$. 
\end{re}

In an earlier version of this article, we referred to \cite[Theorem~3]{Balog} by Balog and Perelli who stated that the above Theorem \ref{main_thm} holds for the particular form $Q(x,y)=x^2+y^2$. They did not provide a proof, merely noting that this result follows by using their method developed in \cite{Balog} to investigate Diophantine approximation with squarefree integers. For this problem, they established an exponent of $1/2-\varepsilon$ indeed. However, analyzing their method, the authors of this article only obtained an exponent of $1/3-\varepsilon$ in place of $1/2-\varepsilon$, which resembles Danicic's result for the particular form $Q(x,y)=x^2+y^2$. We would like to thank Professor Glyn Harman for making us aware of the results of Cook and Danicic mentioned above. Below, we add some details on our analysis of the method used by Balog and Perelli.

They started in a similar way as we do here, converting the Diophantine condition $\|n\alpha\|<\delta$ into a condition of $n$ lying in a union of residue classes modulo $q$. Then they proceeded by detecting the squarefreeness of $n$ using the convolution identity $\mu^2(n)=\sum\limits_{t^2|n} \mu(t)$ and the congruence condition using the orthogonality relations for Dirichlet characters. After isolating a main term, they were left with character sums which were treated using H\"older's inequality and moment bounds for character sums. This yielded an exponent of $1/2-\varepsilon$ for the relevant Diophantine approximation problem. To handle Diophantine approximation with denominators represented by sums of two squares following their approach, one may use the convolution identity $r_2(n)=4\sum\limits_{d|n} \chi_4(d)$ with $r_2(n)$ being the number of representations of $n$ as a sum of two squares and $\chi_4$ the non-trivial character modulo 4 . However, here we have a divisor condition of $d|n$ in place of $t^2|n$. As a consequence (even if one uses the hyperbola trick), one gets only an exponent of $1/3-\varepsilon$ instead of $1/2-\varepsilon$ in the Diophantine approximation problem for sums of two squares. We also note that their method does not extend to general quadratic forms $Q$ because of the lack of a simple formula for the number of representations of $n$ in the form $n=Q(x,y)$.   

\section{Preliminaries}
We may write the quadratic form $Q(x,y)$ as 
$$Q(x,y)=\frac{1}{2}(x,y) A (x,y)^T,$$
where $A$ is an integral symmetric $2\times 2$ matrix with even diagonal elements, and $(x,y)^T$ is the column vector of the indeterminates. Then the adjoint matrix $A^{\dagger}$ of $A$ is defined by $A^\dagger A=\det(A) I_2$, where $\det(A)=\Delta$ is the determinant of the matrix $A$ and $I_2$ is the $2\times 2$ identity matrix. Clearly, $A^\dagger$ has also integer entries.

Our main tools are the Voronoi summation formula for quadratic forms and bounds for bilinear sums with Kloosterman sums to a fixed modulus. To state the Voronoi summation formula in this context, let us first set up some notations. 
For a positive integer $q$ and an integer $h$, we define the Gauss sum associated to the form $Q$ as 
\begin{align}\label{def_G(k,h)}
    G_Q(q,h):=\sum_{x,y\bmod q}e_q(hQ(x,y)).
\end{align}
The following lemma gives an evaluation of $G_Q(q,h)$ if $(2\Delta h,q)=1$.
\begin{lem}\label{Lem_G_Q(k,h,a)}
Suppose that $Q=\frac{1}{2}(x,y) A (x,y)^T$ is a PBQF with discriminant $\Delta$, and let $q, h$ be integers such that $(2\Delta h,q)=1$. 
Then we have
\begin{align}\label{formula_G(k,h)}
    G_Q(q,h)=\left(\frac{h}{N(q,Q)}\right)G_Q(q,1),
\end{align}
where $\left(\frac{h}{N(q,Q)}\right)$ is the Kronecker symbol with 
\begin{align}\label{def_N(k,Q)}
    N(q,Q):=\sharp\{\vec{x}(\bmod q): A\vec{x}\equiv\vec{0} (\bmod q)\}
\end{align}
and $\vec{x}=(x,y)^T.$
\end{lem}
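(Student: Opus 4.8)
The plan is to peel $q$ into prime powers, reduce each prime-power factor to a one-variable quadratic Gauss sum, and invoke the classical evaluation of the latter. For the reduction, write $q=q_1q_2$ with $(q_1,q_2)=1$ and substitute $\vec x=q_2\vec y+q_1\vec z$ with $\vec y$ running modulo $q_1$ and $\vec z$ modulo $q_2$. Since $A$ is integral, the cross term in
\[
hQ(\vec x)=hq_2^2Q(\vec y)+hq_1q_2\,\vec{y}^{T}A\vec z+hq_1^2Q(\vec z)
\]
contributes an integer to the exponent, so $G_Q(q_1q_2,h)=G_Q(q_1,hq_2)\,G_Q(q_2,hq_1)$. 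Likewise $N(q,Q)$ is multiplicative in $q$ by the Chinese Remainder Theorem and the Kronecker symbol is multiplicative in its lower argument, so it suffices to treat $q=p^e$. Under the hypothesis $(2\Delta h,q)=1$ such a $p$ is odd with $p\nmid\Delta h$; in particular $A$ is invertible modulo $p^e$ (with inverse $\Delta^{-1}A^{\dagger}$), hence $N(p^e,Q)=1$ and the Kronecker symbol is $\left(\frac{h}{1}\right)=1$. Thus the prime-power case amounts to showing $G_Q(p^e,c)=G_Q(p^e,1)$ for every $c$ with $(c,p)=1$, and the general identity then follows by applying the multiplicativity relation once more with $h$ replaced by $1$.

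To handle $q=p^e$, observe that since $2$ and $\det A=\Delta$ are units modulo $p^e$, the form $Q$ diagonalizes over $\mathbb{Z}/p^e\mathbb{Z}$: one picks a vector $\vec v_1$ with $Q(\vec v_1)$ a unit (such a vector exists, for otherwise $A\equiv 0\bmod p$), extends it to a basis (it is primitive since one of its entries is a unit), and orthogonalizes, obtaining $U\in\operatorname{GL}_2(\mathbb{Z}/p^e\mathbb{Z})$ with $Q(U\vec x)\equiv\lambda_1 x^2+\lambda_2 y^2\pmod{p^e}$ for units $\lambda_1,\lambda_2$. As $\vec x\mapsto U\vec x$ permutes residues modulo $p^e$,
\[
G_Q(p^e,c)=g(c\lambda_1;p^e)\,g(c\lambda_2;p^e),\qquad g(m;p^e):=\sum_{x\bmod p^e}e_{p^e}(mx^2).
\]
The classical evaluation of the quadratic Gauss sum gives $g(m;p^e)=\left(\frac{m}{p^e}\right)g(1;p^e)$ for $(m,p)=1$ (Jacobi symbol), with $g(1;p^e)\neq 0$; dividing by the corresponding identity with $c=1$ yields
\[
\frac{G_Q(p^e,c)}{G_Q(p^e,1)}=\left(\frac{c\lambda_1}{p^e}\right)\left(\frac{c\lambda_2}{p^e}\right)\left(\frac{\lambda_1}{p^e}\right)^{-1}\left(\frac{\lambda_2}{p^e}\right)^{-1}=\left(\frac{c}{p^e}\right)^2=1,
\]
which is the required identity.

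I do not expect a genuine obstacle here. The one external input is the classical formula $g(m;p^e)=\left(\frac{m}{p^e}\right)g(1;p^e)$, which I would simply cite (its proof peels off an even power of $p$ and reduces to Gauss's evaluation at $e=1$). The only mildly technical point is the diagonalizability of $A$ over $\mathbb{Z}/p^e\mathbb{Z}$, handled by the orthogonalization above; alternatively one may bypass diagonalization entirely, since $cQ$ and $Q$ are nondegenerate binary quadratic forms over $\mathbb{Z}/q\mathbb{Z}$ whose determinants differ by the square $c^2$, hence are equivalent, so that some $U\in\operatorname{GL}_2(\mathbb{Z}/q\mathbb{Z})$ realizes $Q(U\vec x)\equiv cQ(\vec x)$ and $G_Q(q,c)=G_Q(q,1)$ in a single step. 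In every case, under the stated hypothesis the Kronecker symbol appearing in the lemma equals $1$, so the real content is the identity $G_Q(q,h)=G_Q(q,1)$; the symbol is kept in the statement only so that the formula matches the shape of the general Gauss-sum evaluation for quadratic forms.
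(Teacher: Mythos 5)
Your argument is correct, but it is worth noting that the paper does not actually prove this lemma: its ``proof'' is a bare citation to Callahan--Smith and Jutila, who establish the general evaluation of $G_Q(q,h)$ without the restriction $(2\Delta h,q)=1$. So your proposal is a genuinely self-contained replacement rather than a variant of the paper's route. Your three ingredients are all sound: the twisted multiplicativity $G_Q(q_1q_2,h)=G_Q(q_1,hq_2)G_Q(q_2,hq_1)$ (the cross term $hq_1q_2\,\vec y^{\,T}A\vec z$ is indeed an integer multiple of $q_1q_2$, so it dies in $e_q(\cdot)$ --- this is essentially the computation the paper itself carries out later in Lemma~\ref{lem_G(k,1)_mul}); the observation that $(2\Delta,q)=1$ forces $A$ to be invertible modulo $q$, hence $N(q,Q)=1$ and the Kronecker symbol is $\bigl(\frac{h}{1}\bigr)=1$ (the paper rederives exactly this in Lemma~\ref{lem_N(k,Q)} and equation \eqref{G(q,h)=G(q,1)}); and the diagonalization of $Q$ over $\mathbb{Z}/p^e\mathbb{Z}$ for odd $p\nmid\Delta$, which reduces everything to the classical identity $g(m;p^e)=\bigl(\frac{m}{p^e}\bigr)g(1;p^e)$, whence the ratio $G_Q(p^e,c)/G_Q(p^e,1)=\bigl(\frac{c}{p^e}\bigr)^2=1$. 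Your existence argument for a vector with $Q(\vec v_1)$ a unit and the Gram--Schmidt step (legitimate since $2$ and $Q(\vec v_1)$ are units) are both fine. What your approach buys is transparency: under the stated coprimality hypothesis the lemma has no real content beyond $G_Q(q,h)=G_Q(q,1)$, and you prove exactly that with elementary tools; what the citation buys the authors is the uniform statement with the Kronecker symbol, valid also when $N(q,Q)>1$, even though that generality is never used in the paper. The one step you should either prove or cite carefully is the one-variable evaluation $g(m;p^e)=\bigl(\frac{m}{p^e}\bigr)g(1;p^e)$; as you note, it follows by peeling off $p^2$ via $x=y+p^{e-1}z$ and reducing to Gauss's case $e=1$.
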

\begin{proof}
    See \cite[Theorem~1]{Smith} or \cite[Theorem~1]{Jutila} for the particular case when $(2\Delta h, q)=1$.
\end{proof} 
We also note the following bound for the Gauss sum $G_Q(q,h)$ as it will be required later.
\begin{lem}\label{lem_bound_G_q(q,h)}
    For any $q\in \mathbb{N}$ and $h\in \mathbb{Z}$ with $(q,h)=1$, we have 
    \begin{align*}
        |G_Q(q,h)|\leq \left(N(q, Q)\right)^2 q,
    \end{align*}
where $N(q,Q)$ is given by \eqref{def_N(k,Q)}.
\end{lem}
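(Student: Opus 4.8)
The plan is to evaluate $|G_Q(q,h)|^2$ exactly and reduce everything to counting the solutions of $A\vec{x}\equiv\vec{0}\pmod q$, i.e.\ to $N(q,Q)$. Writing $Q(\vec{x})=\tfrac12\vec{x}^{\,T}A\vec{x}$ throughout, I would first expand
$$
|G_Q(q,h)|^2=\sum_{\vec{x}\bmod q}\sum_{\vec{y}\bmod q}e_q\!\bigl(h\bigl(Q(\vec{x})-Q(\vec{y})\bigr)\bigr)
$$
and substitute $\vec{x}=\vec{y}+\vec{z}$, which for each fixed $\vec{y}$ runs over a complete residue system $\vec{z}\bmod q$. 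Using the symmetry of $A$ one has the identity $Q(\vec{y}+\vec{z})-Q(\vec{y})=\vec{y}^{\,T}A\vec{z}+Q(\vec{z})$, in which the cross term $\vec{y}^{\,T}A\vec{z}$ is an \emph{integer} since $A$ has integer entries, and therefore contributes a trivial factor to the character. This gives
$$
|G_Q(q,h)|^2=\sum_{\vec{z}\bmod q}e_q\bigl(hQ(\vec{z})\bigr)\sum_{\vec{y}\bmod q}e_q\bigl(h\,\vec{y}^{\,T}A\vec{z}\bigr).
$$

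Next I would evaluate the inner sum. It factors as a product over the two coordinates of $\vec{y}$, and each factor is a complete linear exponential sum $\sum_{y\bmod q}e_q\bigl(hy(A\vec{z})_i\bigr)$, which equals $q$ when $q\mid h(A\vec{z})_i$ and vanishes otherwise. Since $(h,q)=1$, both factors are nonzero exactly when $A\vec{z}\equiv\vec{0}\pmod q$, in which case the inner sum equals $q^2$. Hence
$$
|G_Q(q,h)|^2=q^2\sum_{\substack{\vec{z}\bmod q\\ A\vec{z}\equiv\vec{0}\bmod q}}e_q\bigl(hQ(\vec{z})\bigr),
$$
and bounding each summand trivially by $1$ in absolute value yields $|G_Q(q,h)|^2\le q^2 N(q,Q)$, so that $|G_Q(q,h)|\le q\,N(q,Q)^{1/2}\le q\,N(q,Q)^2$ because $N(q,Q)\ge 1$. (One can even be sharper: if $A\vec{z}\equiv\vec{0}\pmod q$ then $2Q(\vec{z})=\vec{z}^{\,T}(A\vec{z})\equiv 0\pmod q$, so each summand is $\pm1$ when $q$ is even and is $1$ when $q$ is odd, giving the exact identity $|G_Q(q,h)|=q\,N(q,Q)^{1/2}$ for odd $q$; the crude bound in the statement is all that is needed later.)

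The argument is essentially self-contained, so there is no serious obstacle. The only points requiring a line of justification — and hence the closest thing to a ``hard part'' — are that $e_q(hQ(\vec{z}))$ is a well-defined function of $\vec{z}\bmod q$, which follows from $Q(\vec{z}+q\vec{w})\equiv Q(\vec{z})\pmod q$ (again using the integrality of $A$), and the integrality of the cross term noted above. In contrast with the precise evaluation in Lemma~\ref{Lem_G_Q(k,h,a)}, this proof uses nothing about the factorization of $q$ or about $\Delta$, and in particular requires no coprimality of $q$ with $2\Delta$.
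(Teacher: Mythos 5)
Your proof is correct, but it takes a genuinely different route from the paper. The paper does not give a self-contained argument at all: it simply cites Callahan--Smith and Smith, sketching that one diagonalizes the form, factors the resulting sum into two one-variable Gauss sums, and applies the standard bound for those. Your squaring (Weyl-differencing) argument --- expanding $|G_Q(q,h)|^2$, shifting $\vec{x}=\vec{y}+\vec{z}$, and evaluating the linear sum over $\vec{y}$ to produce the condition $A\vec{z}\equiv\vec{0}\pmod q$ --- is elementary, avoids any diagonalization (which is delicate when $q$ is even or shares factors with the coefficients), and in fact yields the \emph{stronger} bound $|G_Q(q,h)|\le q\,N(q,Q)^{1/2}$, from which the stated $q\,N(q,Q)^2$ follows since $N(q,Q)\ge 1$. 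All the steps check out: $Q(\vec{y}+\vec{z})-Q(\vec{y})=\vec{y}^{T}A\vec{z}+Q(\vec{z})$ by symmetry of $A$, the cross term is an integer because $A$ is integral, the inner sum over $\vec{y}$ factors coordinatewise into complete linear exponential sums, and $(h,q)=1$ converts the vanishing condition into $A\vec{z}\equiv\vec{0}\pmod q$. The only blemish is the phrase claiming the cross term ``contributes a trivial factor to the character'': it does not --- $e_q(h\,\vec{y}^{T}A\vec{z})$ is precisely the nontrivial factor whose sum over $\vec{y}$ produces the delta condition; what its integrality actually buys you is that $\vec{y}\mapsto e_q(h\,\vec{y}^{T}A\vec{z})$ is a well-defined additive character of $(\mathbb{Z}/q\mathbb{Z})^2$. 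Since your displayed formula keeps this factor and handles it correctly, this is a wording slip rather than a gap.
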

\begin{proof}
This bound follows by diagonalizing the quadratic form, writing the resulting sum as a product of two one variable Gauss sums and then using the standard bound for them (see \cite[Lemma~2]{Smith}, or, \cite[Lemma~1]{Smith_1}). 
\end{proof}
Let $A_1=(s_i\delta_{ij})$ be the Smith Normal form of $A^\dagger$, i.e., $A_1$ is an integral diagonal matrix of the form 
\begin{align*}
    A_1=UA^\dagger V,
\end{align*}
where $U,V\in\SL$. Define further, 
\begin{align*}
B:=(b_i\delta_{ij})\quad \mbox{and} \quad  A^*:=BV^TA^\dagger VB.
\end{align*}
Let $Q^*$ be the quadratic form associated to the matrix $A^*$ and define 
\begin{align}\label{def_tilde{r}}
    \tilde{r}_{Q^*}(n,q,h):=q^{-1}\sum_{\substack{Q^*(\vec{x})=n}}\ \sum_{\vec{y}(\bmod q)}e_q(hQ(\vec{y})).
\end{align}
Now we are ready to state the Voronoi summation formula for the quadratic form $Q$.
\begin{lem}[Voronoi summation formula]\label{Voronoi}
    Let $f(n)$ be a continuously differentiable function on $\mathbb{R}$ with compact support in $\mathbb{R}_{\ge 0}$. Then under the conditions in Lemma \ref{Lem_G_Q(k,h,a)}, we have 
\begin{align*}
        &\sum_{n\in\mathbb{Z}}r_Q(n)e_q(nh)f(n)=2\pi \Delta^{-1/2}q^{-2}G_Q(q,h)\int\limits_0^{\infty} f(x) dx \\
        &+2\pi (q\sqrt{\Delta})^{-1}\sum_{n\in\mathbb{Z}}\tilde{r}_{Q^*}(n,q,h)e_{q}(-\overline{h}\overline{\Delta}n)\int\limits_0^{\infty} J_0\left(4\pi\frac{\sqrt{nx/\Delta}}{q}\right)f(x)dx,
    \end{align*}
where $J_0(x)$ is the standard Bessel function and $\overline{\Delta}$ is a multiplicative inverse of $\Delta$ modulo $q$, i.e. $\overline{\Delta}\Delta\equiv 1\bmod{q}$.  
\end{lem}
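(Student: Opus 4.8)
The plan is to unfold the left-hand side into a sum over the lattice $\mathbb{Z}^2$, split it into residue classes modulo $q$, and then apply two-dimensional Poisson summation, following the classical derivation of Voronoi-type formulas for quadratic forms. Since $r_Q(n)$ counts the representations $n=Q(\vec{x})$, I first write
\[
\sum_{n\in\mathbb{Z}}r_Q(n)e_q(nh)f(n)=\sum_{\vec{x}\in\mathbb{Z}^2}e_q\bigl(hQ(\vec{x})\bigr)f\bigl(Q(\vec{x})\bigr).
\]
Writing $\vec{x}=\vec{u}+q\vec{m}$ with $\vec{u}$ running over residues modulo $q$ and $\vec{m}\in\mathbb{Z}^2$, the identity $Q(\vec{u}+q\vec{m})=Q(\vec{u})+q\,\vec{m}^{T}A\vec{u}+q^2Q(\vec{m})$ shows that $e_q(hQ(\vec{x}))=e_q(hQ(\vec{u}))$ depends only on $\vec{u}\bmod q$. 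Hence the sum becomes $\sum_{\vec{u}\,(q)}e_q(hQ(\vec{u}))\sum_{\vec{m}\in\mathbb{Z}^2}f(Q(\vec{u}+q\vec{m}))$.

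The next step is to apply Poisson summation to the inner sum over $\vec{m}$. After the substitution $\vec{v}=\vec{u}+q\vec{m}$ in the resulting Fourier integrals, I obtain
\[
\sum_{n\in\mathbb{Z}}r_Q(n)e_q(nh)f(n)=q^{-2}\sum_{\vec{n}\in\mathbb{Z}^2}S(\vec{n})\int_{\mathbb{R}^2}f\bigl(Q(\vec{v})\bigr)e_q(-\vec{n}\cdot\vec{v})\,d\vec{v},
\]
where $S(\vec{n})=\sum_{\vec{u}\,(q)}e_q\bigl(hQ(\vec{u})+\vec{n}\cdot\vec{u}\bigr)$ is a complete exponential sum. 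Because $(2\Delta h,q)=1$, the matrix $hA$ is invertible modulo $q$ and $2$ is invertible modulo $q$, so I may complete the square; using $A^{-1}=\Delta^{-1}A^{\dagger}$ and the fact that $\tfrac12\vec{n}^{T}A^{\dagger}\vec{n}$ is an integer, this yields $S(\vec{n})=G_Q(q,h)\,e_q\bigl(-\overline{h}\,\overline{\Delta}\cdot\tfrac12\vec{n}^{T}A^{\dagger}\vec{n}\bigr)$, the shift in $\vec{u}$ leaving the Gauss sum $G_Q(q,h)$ unchanged.

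It then remains to evaluate the Fourier integral and assemble the two pieces. Diagonalizing $Q$ by a real linear map $P$ with $P^{T}AP=2I_2$ (so that $|\det P|=2\Delta^{-1/2}$ and $PP^{T}=2\Delta^{-1}A^{\dagger}$) reduces the integral to the radial two-dimensional Fourier transform of $f(|\cdot|^2)$, which is a Hankel transform of order zero and produces a Bessel function. Tracking the constants gives
\[
\int_{\mathbb{R}^2}f\bigl(Q(\vec{v})\bigr)e_q(-\vec{n}\cdot\vec{v})\,d\vec{v}=\frac{2\pi}{\sqrt{\Delta}}\int_0^{\infty}f(x)\,J_0\!\left(\frac{4\pi}{q}\sqrt{\frac{x}{\Delta}\cdot\tfrac12\vec{n}^{T}A^{\dagger}\vec{n}}\right)dx.
\]
The term $\vec{n}=\vec{0}$ contributes $J_0(0)=1$ together with the area-of-ellipse computation $\int_{\mathbb{R}^2}f(Q(\vec{v}))\,d\vec{v}=2\pi\Delta^{-1/2}\int_0^{\infty}f$, reproducing exactly the main term $2\pi\Delta^{-1/2}q^{-2}G_Q(q,h)\int_0^{\infty}f$. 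For $\vec{n}\neq\vec{0}$, the dual form $\vec{n}\mapsto\tfrac12\vec{n}^{T}A^{\dagger}\vec{n}$ is carried, via the unimodular Smith-normal-form change of variables defining $A^{*}=BV^{T}A^{\dagger}VB$ with $U,V\in\SL$, into the integral form $Q^{*}$; grouping the contributions by the common value $n=Q^{*}(\vec{n})$ and using that the inner $\vec{y}$-sum in the definition of $\tilde r_{Q^*}$ equals $G_Q(q,h)$, so that $\tilde r_{Q^*}(n,q,h)=q^{-1}G_Q(q,h)\,r_{Q^*}(n)$, turns the $\vec{n}\neq\vec{0}$ contribution into the stated dual sum with the correct powers of $q$, $\Delta$ and $2\pi$.

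The main obstacle is the rigorous justification of the Poisson summation step and the convergence of the resulting dual sum under the weak hypothesis that $f$ is merely continuously differentiable: for such $f$ the transform of $F(\vec{v})=f(Q(\vec{v}))$ decays too slowly for the dual series to converge absolutely in $\mathbb{R}^2$, so the sum is only conditionally convergent. I would circumvent this by first proving the identity for smooth compactly supported $f$, where the rapid decay of the Fourier transform makes both the Poisson step and the absolute convergence of the dual sum immediate, and then extending to the $C^1$ case by an approximation argument. The passage to the limit would be controlled by integrating by parts once in the Bessel integral (legitimate since $f\in C^1$) and invoking the asymptotic $J_0(z)\ll z^{-1/2}$ together with the lattice-point bound $\sum_{n\le T}r_{Q^*}(n)\ll T$, which bounds the tails uniformly; the remaining verification of the exact normalization identifying $\tfrac12\vec{n}^{T}A^{\dagger}\vec{n}$ with $Q^{*}$ is routine.
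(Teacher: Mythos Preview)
Your approach is correct and is in fact the classical derivation of Voronoi formulas for positive definite quadratic forms via two-dimensional Poisson summation after splitting into residue classes modulo $q$; the computation of $S(\vec n)$ by completing the square, the Hankel transform identification of the Fourier integral, and the constants all check out. The paper itself does not give a proof of this lemma at all: it simply refers to Jutila \cite[Equation~(28)]{Jutila} and specializes to the case $(2\Delta h,q)=1$, and Jutila's argument proceeds exactly along the lines you sketch. So there is nothing to compare beyond noting that you have supplied what the paper outsources, and that your final ``routine'' step, matching $\tfrac12\vec n^{T}A^{\dagger}\vec n$ with $Q^{*}$, is indeed routine here because under $(2\Delta,q)=1$ the Smith-normal-form scaling matrix $B$ is trivial and the change of variables is unimodular.
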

\begin{proof}
   See the proof of \cite[Equation (28)]{Jutila} for the special case when $(2\Delta h,q)=1$. 
\end{proof}
Our second main tool is the following theorem due Kerr et al. which estimates special bilinear forms with Kloosterman sums to a fixed modulus.
\begin{lem}\label{lem_bound_aver_Kloos}
Let $q$ be any positive integer and $a$ be any integer with $d=\gcd(a,q)$. 
    Suppose $I,J$ be some finite intervals of natural numbers with $|I|=M, |J|=N$ and let $\bm{\alpha}=\{\alpha_m\}$ be a sequence of complex numbers supported in $I$. Define 
    \begin{align*}
        S_{q,a}(\bm{\alpha}):=\sum_{m\in I}\sum_{n\in J}\alpha_m  S(m, an; q),
    \end{align*}
    where $S(m,n;q)$ denotes the Kloosterman sum 
    \begin{align*}
        S(m,n;q):=\sum_{\substack{1\leq x\leq q-1\\(x,q)=1}}e\left(\frac{mx+n\overline{x}}{q}\right).
    \end{align*}
    Then we have 
    \begin{align} \label{bibound}
        S_{q,a}(\bm{\alpha})\ll \left(\sum_{m\in I}|\alpha_m|^2\right)^{1/2}M^{1/2}Nq^{1/2+o(1)}S_1(M, N, q,d),
    \end{align}
    where one can take $S_1(M, N, q,d)$ freely among 
    \begin{align}
        M^{-1/4}N^{-1}q^{1/2}d^{-1/4}+M^{-1/2}N^{-1}q^{1/2}+N^{-1/2},\nonumber\\
        M^{-1/2}(N^{-3/4}q^{1/2}+d^{1/2})+N^{-1/2},\label{relevantbound}\\
        M^{-1/2}(N^{-1}q^{1/2}+(qd)^{1/4})+N^{-1/2}.\nonumber
        \end{align}
\end{lem}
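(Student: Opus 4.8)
The plan is to remove the coefficients $\alpha_m$ by Cauchy--Schwarz and then extract cancellation from a completed exponential sum. Applying Cauchy--Schwarz in the variable $m$ gives
\begin{align*}
|S_{q,a}(\bm{\alpha})|^2\le \Bigl(\sum_{m\in I}|\alpha_m|^2\Bigr)\, T,\qquad T:=\sum_{m\in I}\Bigl|\sum_{n\in J}S(m,an;q)\Bigr|^2 .
\end{align*}
Since $|S(m,an;q)|\le q^{1/2+o(1)}\gcd(an,q)^{1/2}$ by the Weil bound, the trivial estimate is $T\ll MN^2q^{1+o(1)}$ up to powers of $d$, which corresponds to $S_1\asymp 1$; the content of the lemma is therefore to save the factor $S_1^2$ in $T$ while tracking $d=\gcd(a,q)$ correctly. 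I would establish each of the three admissible choices of $S_1$ as a separate upper bound for $T$ and then use the freedom to retain whichever is smallest.

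Using that $S(m,an;q)$ is real, I would expand the square, open the two Kloosterman sums, and complete the sum over $m$ to a full residue system by Fourier inversion, obtaining
\begin{align*}
T=\frac1q\sum_{|h|\le q/2}\Bigl(\sum_{m\in I}e_q(mh)\Bigr)\sum_{n_1,n_2\in J}\mathcal K(h,n_1,n_2),
\end{align*}
where, after executing the complete sum over $m$ and substituting $y\equiv h-x$,
\begin{align*}
\mathcal K(h,n_1,n_2)=q\sum_{\substack{x\bmod q\\ (x(h-x),q)=1}}e_q\bigl(an_1\overline{x}+an_2\overline{h-x}\bigr).
\end{align*}
The weight $\sum_{m\in I}e_q(mh)\ll\min(M,q/|h|)$ localizes $h$ near $0$. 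The joint diagonal $n_1=n_2$, $h=0$ collapses (via $\overline{-x}=-\overline x$) to a vanishing phase, leaving the Ramanujan contribution of size $MN\varphi(q)\ll MNq^{1+o(1)}$, which is precisely the $N^{-1/2}$ summand present in each admissible $S_1$.

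For the remaining ranges ($h\ne0$ or $n_1\ne n_2$) the inner sum $\mathcal K(h,n_1,n_2)$ is a one--variable character sum with two inverse poles, to which I would apply the Weil bound; because $\gcd(a,q)=d$ the estimate degenerates and carries an explicit power of $d$, which is exactly the source of the $d$--dependence in the three expressions for $S_1$. The three admissible bounds then correspond to three ways of summing the off-diagonal: (i) inserting the Weil estimate for $\mathcal K$ and summing trivially over $h,n_1,n_2$; (ii) a second Cauchy--Schwarz recasting the $n$--sum as an additive--energy count for modular inverses, i.e. the number of solutions of $\overline{x_1}-\overline{x_2}\equiv\overline{x_3}-\overline{x_4}\pmod q$, bounded by a divisor/Kloosterman--moment estimate; and (iii) an intermediate balancing that optimizes the exponent of $N$ between (i) and (ii). Matching the output of each strategy against the target $T\ll MN^2q^{1+o(1)}S_1^2$ yields the three displayed choices.

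The main obstacle is obtaining the dependence on $d=\gcd(a,q)$ sharply and \emph{uniformly}. When $d>1$ the completed sums $\mathcal K$ genuinely degenerate, so one must propagate the gcd factors through both the Weil bound and the additive--energy count rather than reducing blindly to a primitive modulus, and one must separately control the degenerate configurations (such as $an_1\equiv\pm an_2$) that escape the square-root cancellation. Keeping all error terms at the level $q^{o(1)}$ throughout this bookkeeping is the delicate technical heart of the argument.
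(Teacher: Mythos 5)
First, a point of comparison: this lemma is not proved in the paper at all --- the authors simply cite \cite[Theorem~2.1]{Kerr}, so the real benchmark is the proof of Kerr, Shparlinski, Wu and Xi, which occupies a substantial part of their forty-page paper and rests on several dedicated auxiliary results. Your framework is correct as far as it goes: the Cauchy--Schwarz step, the reduction to bounding $T=\sum_{m\in I}\bigl|\sum_{n\in J}S(m,an;q)\bigr|^2$ by $MN^2q^{1+o(1)}S_1^2$, the completion identity producing the complete sum $\mathcal K(h,n_1,n_2)=q\sum_{(x(h-x),q)=1}e_q\bigl(an_1\overline{x}+an_2\overline{h-x}\bigr)$, and the identification of the diagonal $h=0$, $n_1=n_2$ (of size $MN\varphi(q)$) as the source of the $N^{-1/2}$ summand are all sound. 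Your strategy (i) --- Weil-type bounds for the complete rational sum $\mathcal K$ plus trivial summation over $h,n_1,n_2$, with gcd bookkeeping --- can indeed plausibly be pushed to give the third of the three listed bounds.

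The genuine gap is that the other two bounds are never derived, and one of them, namely \eqref{relevantbound}, is exactly the bound this paper uses in the proof of Lemma~\ref{lem_ET}; it is responsible for the admissible range $\beta\ge 2/5+3\varepsilon$ and hence for the exponent $3/7$ in Corollary~\ref{main_thm 2}. Your step (ii) compresses the entire difficulty into the phrase ``additive-energy count for modular inverses \dots bounded by a divisor/Kloosterman-moment estimate.'' That count --- the number of solutions of $\overline{x_1}+\overline{x_2}\equiv\overline{x_3}+\overline{x_4}\ \pmod q$ with the $x_i$ in an interval of length $N$, uniformly in $q$ and with the correct dependence on $d$ --- is a non-trivial theorem in its own right, and it is precisely what produces the exponent $N^{-3/4}$ in \eqref{relevantbound}. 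The cheap versions of your argument do not reach it: for instance, extending the $m$-sum to a full residue system by positivity and evaluating the resulting Ramanujan sums gives only $T\ll Nq^2+N^2q^{1+o(1)}d$, i.e.\ the term $M^{-1/2}N^{-1/2}q^{1/2}$ instead of $M^{-1/2}N^{-3/4}q^{1/2}$, which is too weak for the paper's application (it would force $\beta>1/2$ and $\gamma<1/3$, as the paper's closing remark notes for the trivial treatment). Likewise your step (iii), the ``intermediate balancing,'' is asserted rather than exhibited. Since you yourself defer the uniform $d$-dependence and the $q^{o(1)}$ bookkeeping as ``the delicate technical heart'' without carrying it out, what you have is a plausible road map, not a proof; the missing content is exactly the part for which the paper cites \cite{Kerr}.
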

\begin{proof}
    See the proof of \cite[Theorem~2.1]{Kerr}.
\end{proof}
The following Dirichlet approximation type lemma will allow us to choose the modulus $q$ coprime to $2\Delta$ (in fact coprime to any given non-zero integer). 
\begin{lem}\label{lem_Dirichlet_approx}
    Let $d\in \mathbb{N}$ and $\alpha\in \mathbb{R}\setminus\mathbb{Q}$. Then there exist infinitely many pairs $(b,r)\in \mathbb{Z}\times \mathbb{N}$ such that
$$
(r,bd)=1\quad \mbox{and} \quad \left|\alpha-\frac{b}{r}\right|\le \frac{6d^2}{r^2}. 
$$
\end{lem}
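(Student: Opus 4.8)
The plan is to deduce this from the classical Dirichlet approximation theorem by a simple pigeonhole/covering argument that removes unwanted common factors. First I would apply the standard Dirichlet theorem: for every $N\in\mathbb{N}$ there exist $(p,s)\in\mathbb{Z}\times\mathbb{N}$ with $1\le s\le N$, $(p,s)=1$, and $|\alpha-p/s|\le 1/(sN)\le 1/s^2$. Since $\alpha$ is irrational, letting $N\to\infty$ produces infinitely many such reduced fractions $p/s$ with $|\alpha-p/s|<1/s^2$. The issue is that $s$ need not be coprime to $d$.

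The key step is to clear the ``bad'' part of the denominator. Write $s=s_1 s_2$ where $s_1$ collects all prime factors of $s$ dividing $d$ (with multiplicity) and $(s_2,d)=1$. Then $s_1\le s_1^{?}$ is bounded in terms of... no; instead the cleaner route is: among the infinitely many reduced fractions $p/s$ with $|\alpha-p/s|<1/s^2$, I will find one whose denominator is coprime to $d$ after a bounded adjustment. Concretely, consider the fraction $p/s$ and multiply numerator and denominator conceptually: look at $\lfloor \alpha d^2\rfloor$-type fractions, or better, apply Dirichlet's theorem directly to $\alpha$ with the constraint built in. The slickest argument: apply the pigeonhole principle to the $N+1$ points $\{d\alpha j\}$ for $0\le j\le N$ in $[0,1)$ to get $0\le j_1<j_2\le N$ with $\|d\alpha(j_2-j_1)\|<1/N$; set $r:=j_2-j_1\le N$ and let $b$ be the nearest integer to $d\alpha r$, so $|d\alpha r-b|<1/N\le 1/r$, i.e. $|\alpha-b/(dr)|<1/(dr^2)$. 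This gives a fraction with denominator $dr$, still divisible by $d$ — not quite what we want either.

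So the right move is the reverse: I want $(r,bd)=1$, meaning $r$ coprime to $d$ \emph{and} to $b$. Take the reduced Dirichlet fraction $p/s$ with $|\alpha-p/s|<1/s^2$; since it is reduced, $(p,s)=1$ automatically. If $(s,d)=1$ we are done with $b=p$, $r=s$, noting $1/s^2\le 6d^2/s^2$. If $(s,d)>1$, write $d'$ for the radical of $\gcd(s^\infty,d)$ and choose an integer $t$ with $1\le t\le d'$ and $(s+t\cdot(\text{something}),d)=1$ — more carefully, since $|\alpha - p/s|<1/s^2$ and $|\alpha-(p+ps')/(s+ss')|$ type perturbations are too crude, the correct bound: replace $p/s$ by a mediant-type neighbour. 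Actually the intended proof is surely: apply Dirichlet to get $|\alpha - p/s|\le 1/(sN)$ with $s\le N$, then pick $N$ in the interval $[d s', 2ds']$... The cleanest is: the reduced fraction $p/s$ satisfies $|\alpha - p/s| < 1/s^2$; now among any $d$ consecutive multiples, one can clear $d$ at cost of replacing $s$ by $s s_0$ with $s_0\mid d$, $s_0\le d$, and adjusting the approximation to $|\alpha - b/r|$ with $r = s s_0 / \gcd \le s d$ but the quality $1/s^2 = s_0^2 d^2/(r^2 \cdot (\text{stuff})) \le d^2/r^2 \cdot s_0^2 \le d^2 \cdot d^2/r^2$ — which would only give $6d^4$. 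To land exactly $6d^2$, I expect one uses that $s_0^2 \le s_0 \cdot d$ and a factor $6$ absorbs small cases, OR one uses the three-distance/continued-fraction structure to pick a convergent denominator coprime to $d$ directly.

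The main obstacle, therefore, is the bookkeeping to get the constant down to exactly $6d^2$ (rather than a larger power of $d$): one must be careful to scale $N$ and choose the perturbation of the denominator optimally, using that the denominators of consecutive continued-fraction convergents are coprime, so at most one of two consecutive convergent denominators can share a fixed prime $p\mid d$; iterating over the (finitely many) primes dividing $d$ via CRT yields, infinitely often, a convergent denominator $r$ with $(r,d)=1$, and since convergents are reduced, $(r,b)=1$ as well, while $|\alpha-b/r|<1/r^2\le 6d^2/r^2$. I would write this using the convergents $b_k/r_k$ of $\alpha$, note $|\alpha - b_k/r_k| < 1/(r_k r_{k+1}) < 1/r_k^2$, recall $\gcd(r_k, r_{k+1})=1$ and $\gcd(b_k,r_k)=1$, and argue by a counting/CRT argument over $k$ in a window of length comparable to $\prod_{p\mid d} v_p$ that some $r_k$ is coprime to $d$; the constant $6$ then comfortably covers the finitely many exceptional cases where $\alpha$ has an unusually small partial quotient or $d$ is small.
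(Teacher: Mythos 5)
Your proposal does not settle on a single complete argument, and the one branch you do commit to at the end --- using continued--fraction convergents $b_k/r_k$ and a ``counting/CRT argument'' to find infinitely many $k$ with $(r_k,d)=1$ --- contains a genuine gap. The observation that $\gcd(r_k,r_{k+1})=1$ only tells you that, for each \emph{fixed} prime $p\mid d$, at most one of any two consecutive denominators is divisible by $p$. When $d$ has two or more prime factors the union bound gives a proportion of ``bad'' indices of at least $\tfrac{1}{2}\omega(d)\ge 1$, so the counting argument proves nothing; worse, the statement you need is simply false for some $\alpha$. Take $d=6$ and build $\alpha=[a_0;a_1,a_2,\dots]$ recursively via $r_{k+1}=a_{k+1}r_k+r_{k-1}$: starting from $r_0=1$, choose $a_1=2$ (so $r_1=2$), then alternately choose $a_{k+1}\equiv 0\pmod 3$ or $a_{k+1}\equiv 0\pmod 2$ so that $r_{k+1}$ is divisible by $3$ when $r_k$ is divisible by $2$ and vice versa (e.g.\ $r_1=2,\ r_2=3,\ r_3=8,\ r_4=27,\ r_5=62,\ r_6=213,\dots$). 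Every convergent denominator with $k\ge 1$ then shares a factor with $6$, so your method produces only finitely many admissible pairs for this $\alpha$. The earlier branches of your writeup (clearing the bad part of $s$, mediants, perturbing $N$) are each abandoned before reaching a bound of the shape $6d^2/r^2$, so none of them closes the gap either.

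The paper's proof avoids convergents entirely. It starts from an ordinary Dirichlet fraction $a/q$ with $|\alpha-a/q|\le 1/q^2$ and \emph{constructs} a nearby fraction $b/r$ satisfying $a/q-b/r=f/(qr)$ with $0\le f\le d$, where the coprimality $(r,bd)=1$ is forced by imposing the congruence $r\equiv 1\pmod{bd}$. Writing $r=1+kbd$ and substituting into $ar\equiv f\pmod q$ reduces the construction to a solvable linear congruence for $kb$ modulo $q/(d,q)$, and choosing the free parameter so that $r\asymp dq$ (precisely $q/2\le r\le 2dq$) yields $|\alpha-b/r|\le 1/q^2+f/(qr)\le 6d^2/r^2$: the factor $d^2$ comes from the inflation $q\ge r/(2d)$, and the shift $f\le d$ is absorbed into the same bound. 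The key idea missing from your proposal is this: rather than hoping a good approximation with coprime denominator already exists, one perturbs the numerator by an explicitly controlled amount $f$ so that the new denominator can be prescribed in a residue class modulo $bd$.
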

\begin{proof}
Fix $d\in \mathbb{N}$ and $\alpha\in \mathbb{R}\setminus \mathbb{Q}$. We aim to find good approximations of $\alpha$
by fractions $b/r$ with $(r,bd)=1$. Take one of the infinitely many approximations of $\alpha$ by $a/q$ with $(a,q)=1$ and 
\begin{equation} \label{1}
\left|\alpha-\frac{a}{q}\right|\le \frac{1}{q^2}.
\end{equation}
Now we aim to find integers $b$ and $r$ in such a way that $r$ lies approximately in the range of $q$, 
\begin{equation}\label{2}
\frac{a}{q}-\frac{b}{r}= \frac{f}{qr}
\end{equation}
with $f\ge 0$ small and $(r,bd)=1$. To ensure the coprimality, we simply demand that
\begin{equation} \label{3}
r\equiv 1\bmod{bd}.
\end{equation}
Let 
\begin{equation*}\label{4}
(d,q)=e.
\end{equation*}
For our method to work out, we take $f\ge 0$ as small as possible so that 
\begin{equation} \label{5}
f\equiv a\bmod{e}.
\end{equation}
We note that 
\begin{equation} \label{6}
0\le f\le e\le d.
\end{equation}
The equation \eqref{2} holds if 
\begin{equation} \label{7}
ar\equiv f\bmod{q}
\end{equation}
and 
\begin{equation*} \label{8}
b=\frac{ar-f}{q}.
\end{equation*}
The congruence \eqref{3} can be written in the form
\begin{equation} \label{9}
r=1+kbd, 
\end{equation}
where $k\in \mathbb{Z}$. Plugging this into \eqref{7} gives
\begin{equation*} \label{10}
a(1+kbd)\equiv f\bmod{q},
\end{equation*}
which is equivalent to 
\begin{equation*} \label{11}
kbd\equiv \overline{a}(f-a)\bmod{q},
\end{equation*}
where $a$ is a multiplicative inverse of $a$ modulo $q$. Since we assumed \eqref{5}, this congruence is equivalent to
\begin{equation} \label{12}
kb\cdot \frac{d}{e}\equiv \overline{a}\cdot \frac{f-a}{e} \bmod{\frac{q}{e}}.
\end{equation}
Write 
\begin{equation*} \label{13}
d_1:=\frac{d}{e}, \quad f_1:=\overline{a}\cdot \frac{f-a}{e}, \quad q_1:=\frac{q}{e}.
\end{equation*}
Then $(d_1,q_1)=1$, and \eqref{12} turns into
\begin{equation*} \label{14}
kbd_1\equiv f_1\bmod{q_1}. 
\end{equation*}
This can be solved for $2kb$, getting
\begin{equation*} \label{15}
kb\equiv f_1\overline{d_1} \bmod{q_1}, 
\end{equation*}
where $d_1$ is a multiplicative inverse of $d_1$ modulo $q_1$.
This may be written in the form
\begin{equation*} \label{16}
kb=c+q_1l,
\end{equation*}
where $l\in \mathbb{Z}$ and 
\begin{equation*} \label{17}
-\frac{q_1}{2}< c\le \frac{q_1}{2}\quad \mbox{ such that } \quad c\equiv f_1\overline{d_1}\bmod{q_1}.
\end{equation*}
Combining this with \eqref{9}, we obtain
\begin{equation*} \label{18}
r=1+d\left(c+q_1l\right).
\end{equation*}
To ensure that $r$ is of a comparable size as $q$, we choose $l=1$ and hence
\begin{equation*} \label{18'}
r=1+d\left(c+q_1\right).
\end{equation*}
In this case,
\begin{equation*} \label{19}
\frac{q}{2}\le \frac{dq_1}{2} \le r\le 1+\frac{3dq_1}{2}\le 1+\frac{3dq}{2}\le 2dq
\end{equation*}
if $q\ge 2$. From the upper bound above, we have
\begin{equation} \label{20}
q\ge \frac{r}{2d}. 
\end{equation}
Combining \eqref{1}, \eqref{2}, \eqref{6} and \eqref{20}, we obtain
\begin{equation*} 
\left|\alpha-\frac{b}{r}\right|\le \left|\alpha-\frac{a}{q}\right|+\left|\frac{a}{q}-\frac{b}{r}\right| \le \frac{1}{q^2}+\frac{f}{qr}\le \frac{1}{(r/(2d))^2}+\frac{d}{(r/(2d))\cdot r}=\frac{6d^2}{r^2}.
\end{equation*}  
\end{proof}
\section{Proof of Corollary \ref{main_thm 2}}
We start the proof of Corollary \ref{main_thm 2} by the following lemma providing a bound for $r_Q(n)$.
\begin{lem}\label{lem_bound_r_Q(n)} For any given positive integer $n$, let $r_Q(n)$ be the number of integral representations of $n$ by the PBQF $Q(x,y).$ Then we have 
    \begin{align*}
        r_Q(n)\ll_Q n^{c/\log\log n},
    \end{align*}
    for some $c>0.$
\end{lem}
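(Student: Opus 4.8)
The plan is to relate $r_Q(n)$ to the classical divisor-type bound for the number of representations of $n$ by binary quadratic forms. First I would recall that the number of integral representations of $n$ by the full \emph{genus} of forms of discriminant $-\Delta$ is governed, via the theory of the class group of binary quadratic forms, by a weighted sum of $\chi_{-\Delta}(d)$ over divisors $d\mid n$ (Dirichlet's class number formula / Gauss's theory), while the number of representations by a single form $Q$ in its class is bounded by the total number of representations by all forms of that discriminant. Concretely, one has $r_Q(n)\le \sum_{\text{classes } Q'} r_{Q'}(n) \ll_\Delta \sum_{d\mid n}1 = d(n)$, where $d(n)$ is the ordinary divisor function; the number of classes $h(-\Delta)$ is a constant depending only on $Q$ (equivalently on $\Delta$), so it is absorbed in the implied constant.

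Second, I would invoke the standard elementary estimate for the divisor function: $d(n)\ll_\varepsilon n^\varepsilon$ for every $\varepsilon>0$, and more precisely $d(n)\le n^{c/\log\log n}$ for a suitable absolute constant $c>0$ and all $n\ge 3$ (this is the sharp form, due to Wigert/Ramanujan, following from $d(n)= \prod_{p^a\| n}(a+1)$ and optimizing). Combining the two displays gives $r_Q(n)\ll_Q d(n)\ll n^{c'/\log\log n}$ for a possibly adjusted constant $c'>0$, which is exactly the claimed bound. For small $n$ the statement is trivial by enlarging the implied constant.

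The only mildly delicate point is the first step: one must cite or recall cleanly that $r_Q(n)$ for a fixed positive definite form is $O_\Delta(d(n))$. The cleanest route is that a representation $n=Q(x,y)$ corresponds (after the usual correspondence between proper representations and ideals, handling the finitely many imprimitive ones separately) to an ideal of norm $n$ in the order of discriminant $-\Delta$ in $\mathbb{Q}(\sqrt{-\Delta})$, and the number of such ideals is $\ll_\Delta \sum_{d\mid n}1$ since the number of ideals of norm $n$ in any order is bounded by the number of ideals of norm $n$ in the maximal order up to a bounded factor, and the latter is $\le d(n)$. Alternatively, one can bypass ideal theory entirely and use the classical fact that $\sum_{Q' }r_{Q'}(n)$ over a full set of class representatives equals $w\sum_{d\mid n}\chi_{-\Delta}(d)$ (with $w\in\{2,4,6\}$), which is $\le w\,d(n)$, and $r_Q(n)$ is one nonnegative summand. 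I would go with this second, more elementary formulation to keep the proof self-contained. The main obstacle, such as it is, is simply choosing the cleanest citation; no genuine difficulty arises.

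\begin{proof}
Since $Q$ is positive definite, the number $r_Q(n)$ of representations of $n$ by $Q$ is at most the total number of representations of $n$ by a full system of class representatives of primitive, positive definite, integral binary quadratic forms of discriminant $-\Delta$ (together with the finitely many non-primitive ones dividing $Q$), and hence, by the classical formula of Gauss and Dirichlet for the number of representations by a genus,
\begin{align*}
    r_Q(n)\ \le\ \sum_{Q'}r_{Q'}(n)\ \ll_{\Delta}\ \sum_{d\mid n}1\ =\ d(n),
\end{align*}
where $d(n)$ denotes the number of divisors of $n$ and the sum runs over the $h(-\Delta)$ classes of discriminant $-\Delta$; the constant $h(-\Delta)$ depends only on $\Delta$, i.e.\ only on $Q$. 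Now we use the classical Wigert--Ramanujan bound for the divisor function: there is an absolute constant $c_0>0$ such that $d(n)\le n^{c_0/\log\log n}$ for all $n\ge 3$. Combining this with the previous display gives
\begin{align*}
    r_Q(n)\ \ll_{Q}\ n^{c_0/\log\log n}
\end{align*}
for all $n\ge 3$, and the bound for $n\in\{1,2\}$ holds trivially after adjusting the implied constant. This proves the lemma with $c=c_0$.
\end{proof}
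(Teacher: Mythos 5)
Your proof is correct in substance but takes a different route from the paper's. The paper factors $4a_1Q(x,y)$ over the ring of integers of $K=\mathbb{Q}(\sqrt{-\Delta})$, assigns to each representation of $n$ a principal ideal of norm $4a_1n$, observes that each ideal accounts for at most $\omega\le 6$ representations, and then bounds the number of ideals of norm $m$ by $d(m)^2$ (citing Chandrasekharan--Narasimhan), so that $r_Q(n)\le 6\,d(4a_1n)^2\ll_Q n^{2\delta/\log\log n}$. You instead bound $r_Q(n)$ by the total count over all classes of discriminant $-\Delta$ and invoke the Gauss--Dirichlet identity $\sum_{Q'}r^{*}_{Q'}(n)=w\sum_{d\mid n}\chi_{-\Delta}(d)$. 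Both arguments ultimately reduce to the Wigert divisor bound, so each is adequate for the lemma; the paper's version is slightly more robust because it sidesteps the issues your route has to address explicitly, namely that the classical identity counts \emph{primitive} representations of forms of \emph{fundamental} discriminant. Your parenthetical ``handling the finitely many imprimitive ones separately'' is not quite right as stated: the imprimitive representations are not finitely many, but each one $n=Q(fx',fy')$ with $(x',y')=1$ corresponds to a primitive representation of $n/f^2$, so the full count is $\sum_{f^2\mid n}\sum_{Q'}r^{*}_{Q'}(n/f^2)\le w\sum_{f^2\mid n}d(n/f^2)\le w\,d(n)^2$, which still gives the claimed bound after doubling $c_0$. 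Similarly, if $-\Delta$ is not fundamental one works in the corresponding order (or passes to the maximal order with a bounded loss), again only affecting the implied constant. With these two points spelled out your argument is complete; as written, the display $\sum_{Q'}r_{Q'}(n)\ll_\Delta d(n)$ is asserted with slightly more precision than the cited identity delivers.
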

\begin{proof}
Let $K:=\mathbb{Q}(\sqrt{-\Delta})$ and $\mathcal{O}_K$ be the ring of algebraic integers in $K$. Then $4a_1Q(x,y)$ factors over $\mathcal{O}_K$ in the form 
\begin{equation*}
\begin{split}
4a_1Q(x,y)=& 4a_1(a_1x^2+b_1xy+c_1y^2)\\
= & (2a_1x+b_1y-\sqrt{-\Delta}y)(2a_1x+b_1y+\sqrt{-\Delta}y).
\end{split}
\end{equation*}
Hence if $n=Q(x,y)$ with $x,y\in \mathbb{Z}$, then 
$$
4a_1n=\mathcal{N}\left(\mathfrak{a}\right),
$$
where $\mathcal{N}\left(\mathfrak{a}\right)$ is the norm of the principal ideal $\mathfrak{a}\subseteq \mathcal{O}_K$ given by 
$$
\mathfrak{a}=(2a_1x+b_1y+\sqrt{-\Delta}y).
$$
If $n=Q(x,y)$ is a representation of $n$ by $Q$, then we say that $\mathfrak{a}$ given above is the principal ideal belonging to this representation. We note that every principal ideal $\mathfrak{a}\subseteq \mathcal{O}_K$ belongs to at most $\omega\le 6$ representations of $n$ by $Q$, where $\omega$ is the number of units in $\mathcal{O}_K$.  Now for $m\in \mathbb{N}$ let $f(m)$ be the number of integral ideals $\mathfrak{a}\subseteq \mathcal{O}_K$ of norm $m$ in $\mathcal{O}_K$, that is,
 \begin{align*}
     f(m):=\sharp\{ \mathfrak{a}\subseteq \mathcal{O}_K: \mathcal{N}(\mathfrak{a})=m \}.
 \end{align*}
 Then we know that 
 \begin{align*}
     f(m)\leq d(m)^2,
 \end{align*}
where $d(m)$ is the number of divisors of $m\in \mathbb{N}$ (for a proof, see \cite[Equation~(68)]{C-N}). 
Moreover, it is well-known that 
 \begin{align*}
     d(m)\ll m^{\delta/\log\log m},
 \end{align*}
 for some constant $\delta>0$ (see \cite[Theorem 317]{HaWr}). It follows that 
 \begin{align*}
     r_Q(n)\le 6f(4a_1n)\le 6d(4a_1n)^2\ll_Q n^{c/\log\log n},
 \end{align*}
where $c=2\delta>0$. This completes the proof. 
\end{proof}
We are now ready to prove Corollary \ref{main_thm 2}. 
Let an irrational number $\alpha$ be given. Then by Lemma~\ref{lem_Dirichlet_approx}, there are infinitely many pairs $(a, q)\in \mathbb{N}\times \mathbb{Z}$ such that
\begin{align}\label{con_q_1}
    \left|\alpha-\frac{a}{q}\right|\le \frac{24 \Delta^2}{q^2}, \ (2a\Delta,q)=1.
\end{align}
This proves the last part of Corollary \ref{main_thm 2}.
Now, fix such a pair $(a,q)$ and suppose $L, X$ to be parameters satisfying
\begin{align*}
    L=q^\beta, \ X=Lq=q^{1+\beta},
\end{align*}
with $2/5+3\varepsilon\leq \beta<1.$ Recall the definition of $\mathcal{A}_Q$ from \eqref{def_A_Q}. If $n\in\mathcal{A}_Q$ with $n\le 2X$ satisfies one of the following congruences  
\begin{align*}
    an\equiv b \bmod{q}, \ |b|\leq L,\ (b,q)=1,
\end{align*}
then writing $an=b+n_1q$ for some integer $n_1$, it follows from \eqref{con_q_1} that
\begin{align*}
    \alpha n=n_1+\frac{b}{q}+\frac{\theta n}{q^2},
\end{align*}
where $|\theta|\le 24\Delta^2$. In other words, 
\begin{equation*} 
    \|\alpha n\|\ll_Q \frac{L}{q}+\frac{X}{q^2}=2q^{\beta-1}= 2X^{-(1-\beta)/(1+\beta)}=2X^{-\gamma} \ll n^{-\gamma}.
\end{equation*}
Hence, such $n$ lies in the set
\begin{align*}
    \{n\in \mathcal{A}_Q: n\le 2X, \ \|n\alpha\|<C_1n^{-\gamma}\}=:\mathcal{A}_Q(\alpha, X), \ \text{ say},
\end{align*}
where $C_1>0$ is a suitable constant depending on $Q$.
Thus, recalling
\begin{align*}
    r_Q(n)\ll n^{c/(\log\log n)} \quad \mbox{for some constant } c>0
\end{align*}
 from Lemma~\ref{lem_bound_r_Q(n)}, and noting that
\begin{align*}
    b_1\not\equiv b_2( \bmod{q} )\Longrightarrow n_1\not\equiv n_2 (\bmod{q})
\end{align*} 
if $q$ is large enough, 
we deduce from Theorem~\ref{main_thm} that
\begin{align*}
    \frac{XL}{q \log^2 q}\ll S=\sum\limits_{\substack{|b|\le L\\(b,q)=1}} \sum\limits_{\substack{n\in\mathbb{Z}\\ na\equiv b\bmod{q}}} r_Q(n)w(n)\ll X^{c/(\log\log X)} |A_Q(\alpha, X)|,
\end{align*}
that is,
\begin{align*}
    |\mathcal{A}_{Q}(\alpha, X)|\gg \frac{X^{1-\gamma-c/(\log\log X)}}{\log^2 q}.
\end{align*}
Since $q=X^{1/(1+\beta)}$, it follows that 
\begin{align*}
    |\mathcal{A}_{Q}(\alpha, X)|\gg X^{1-\gamma-C_2/(\log\log X)}
\end{align*}
if $C_2>c$. This completes the proof of Corollary \ref{main_thm 2}.
\section{Application of Voronoi summation}
Our proof of Theorem \ref{main_thm} begins with an application of Voronoi summation. We first detect the congruence condition on the right-hand side of \eqref{Sdef} using the
orthogonal property of additive characters, thus obtaining  
\begin{align*}
    S= & \frac{1}{q}\sum_{\substack{|b|\le L\\(b,q)=1}} \sum_{n\in\mathbb{Z}}r_Q(n)w(n) \sum_{g \bmod q}e_q\left((n-\overline{a}b)g\right)\\ = &  \frac{1}{q}\sum_{\substack{|b|\le L\\(b,q)=1}} \sum_{n\in\mathbb{Z}}r_Q(n)w(n)\sum_{k| q}\  \sideset{}{^*}\sum_{h \bmod k}e_k\left((n-\overline{a}b)h\right),
\end{align*}
where $a\overline{a}\equiv 1 \bmod q$, and the asterisk above indicates that $(h,k)=1$.  As the weight function $w(n)$ is compactly supported, all the summations in $S$ run over finite ranges and so we can interchange the order of  summations. Hence, 
\begin{align*}
    S=\frac{1}{q}\sum_{\substack{|b|\le L\\(b,q)=1}}\sum_{k| q}\ \sideset{}{^*}\sum_{h \bmod k}e_k(-\overline{a}bh)\sum_{n\in\mathbb{Z}}r_Q(n)w(n)e_k(nh).
\end{align*}
Now applying Lemma~\ref{Voronoi} to the inner-most sum, we get
\begin{equation}\label{S=T_1+T_2}
\begin{split}
    S= & \frac{2\pi}{q\Delta^{1/2}}\left(\int\limits_0^{\infty} w(x)dx\right)\sum_{\substack{|b|\le L\\(b,q)=1}}\sum_{k| q}\frac{1}{k^2} \sideset{}{^*}\sum_{h\bmod k}G_Q(k,h)e_k(-\overline{a}bh)+\\ & \frac{2\pi}{q\Delta^{1/2}}\sum_{\substack{|b|\le L\\(b,q)=1}}\sum_{k| q}\ \sideset{}{^*}\sum_{h\bmod k}\hspace{-0.3cm}e_k(-\overline{a}bh)\sum_{n\in \mathbb{Z}}\tilde{r}_{Q^*}(n,k,h)e_{k}(-\overline{h}\overline{\Delta}n)\tilde{w}_k(n)\\
    =: & T_1+T_2\ \text( say ),
\end{split}
\end{equation}
where \begin{align}\label{def_w_k(n)}
    \tilde{w}_k(n):=\frac{1}{k}\int\limits_0^{\infty} J_0\left(4\pi \frac{\sqrt{nx/\Delta}}{k}\right)w(x)dx .
\end{align}
Recalling our assumption $(2\Delta h,q)=1$, and using \eqref{def_G(k,h)}, \eqref{formula_G(k,h)} and \eqref{def_tilde{r}}, we obtain
\begin{equation}\label{tilde_r_2}
\begin{split}
    \tilde{r}_{Q^*}(n,k,h)&=k^{-1}\sum_{\substack{Q^*(\vec{x})=n}}\ \sum_{\vec{y}\bmod k}e_k(hQ(\vec{y}))\\
    &=k^{-1}\sum_{\substack{Q^*(\vec{x})=n}}G_Q(k,h)\\
    &=k^{-1}\sum_{\substack{Q^*(\vec{x})=n}}\left(\frac{h}{N(k,Q)}\right)G_Q(k,1)\\
    &=k^{-1}G_Q(k,1)r_{Q*}(n)\left(\frac{h}{N(k,Q)}\right).
     \end{split}
\end{equation}
\begin{lem}\label{lem_N(k,Q)}
Assume that $(2\Delta,q)=1$. Then for $k|q$, we have 
    \begin{align}
        N(k,Q)=1.
    \end{align}
\end{lem}
\begin{proof}
    Recall that 
    \begin{align*}
        N(k, Q)=\sharp\{\vec{x} \bmod{k} : A\vec{x}\equiv 0\bmod{k}\},
    \end{align*}
    where $Q=\frac{1}{2}(x,y)A(x,y)^T$. Now as $Q(x,y)=a_1x^2+b_1xy+c_1y^2$, we have 
    \begin{align*}
        A\vec{x}\equiv 0\bmod{k} \Longleftrightarrow \begin{cases} 2a_1x+b_1y\equiv 0 \bmod{k}\\ 
 b_1x+2c_1y\equiv 0\bmod{k}, \end{cases}
    \end{align*}
    where $\vec{x}=(x,y)^T$. Observe that the determinant of the coefficient matrix 
$$
\begin{pmatrix}
2a_1 & b_1\\
b_1 & 2c_1
\end{pmatrix}
$$
equals $\Delta$. Since  $(\Delta,k)=1$, it follows that the above system  
of congruences has only the trivial solution $x=y=0$ and hence $N(k,Q)=1$.  
\end{proof}
Combining the above Lemma~\ref{lem_N(k,Q)} with \eqref{tilde_r_2}, we see that 
\begin{align} \label{rform}
    \tilde{r}_{Q^*}(n,k,h)=k^{-1}G_Q(k,1)r_{Q*}(n).
\end{align}
\section{Evaluation of the main term $T_1$}
Using $k|q$ and Lemmas \ref{Lem_G_Q(k,h,a)} and \ref{lem_N(k,Q)},  we have  
\begin{align}\label{G(q,h)=G(q,1)}
    G_Q(k,h)=G_Q(k,1).
\end{align}
Therefore, the main term $T_1$ in \eqref{S=T_1+T_2} turns into 
\begin{equation}\label{MT}
\begin{split}
T_1= &   \frac{2\pi}{q\Delta^{1/2}}\left(\int\limits_0^{\infty} w(x)dx\right)\sum_{\substack{|b|\leq L\\(b,q)=1}}\sum_{k| q}\frac{1}{k^2} G_Q(k,1)\sideset{}{^*}\sum_{h\bmod k}e_k(-\overline{a}bh)\\
= & \frac{2\pi}{q\Delta^{1/2}}\left(\int\limits_0^{\infty} w(x)dx\right)\sum_{\substack{|b|\leq L\\(b,q)=1}}\sum_{k| q}\frac{\mu(k)}{k^2} G_Q(k,1)\\
= & \frac{2\pi}{q\Delta^{1/2}}\left(\int\limits_0^{\infty} w(x)dx\right)\left(\frac{\phi(q)}{q}\cdot 2L+O(\tau(q))\right)\sum_{k| q}\frac{\mu(k)}{k^2} G_Q(k,1),
\end{split}
\end{equation}
where $\tau(q)$ denotes the number of divisors of $q$. Here we have used the fact that the Ramanujan sum   
\begin{align*}
    c_k(\overline{a}b)=\sideset{}{^*}\sum_{h\bmod k}e_k(-\overline{a}bh)
\end{align*}
satisfies $c_k(\overline{a}b)=\mu(k)$ if $(\overline{a}b, k)=1$.
\begin{lem}\label{lem_G(k,1)_mul} The function $G_Q(k,1)$ is multiplicative on the set of odd positive integers, that is, for odd $k=k_1k_2$ with $(k_1, k_2)=1$, we have  
    \begin{align*}
        G_Q(k,1)=G_Q(k_1,1)G_Q(k_2,1).
    \end{align*}
\end{lem}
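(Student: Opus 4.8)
The plan is to run the standard Chinese Remainder Theorem factorization for the quadratic exponential sum defining $G_Q$, and then strip off the resulting twists using Lemma~\ref{Lem_G_Q(k,h,a)}. Fix odd $k=k_1k_2$ with $(k_1,k_2)=1$, and pick integers $u,v$ with $uk_2\equiv 1\pmod{k_1}$ and $vk_1\equiv 1\pmod{k_2}$. Then $uk_2+vk_1\equiv 1\pmod k$, so for every integer $m$ one has $\tfrac{m}{k}\equiv \tfrac{mu}{k_1}+\tfrac{mv}{k_2}\pmod 1$, i.e.\ $e_k(m)=e_{k_1}(mu)\,e_{k_2}(mv)$. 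Also, reduction modulo $k_1$ and $k_2$ gives a bijection $(\mathbb{Z}/k\mathbb{Z})^2\to(\mathbb{Z}/k_1\mathbb{Z})^2\times(\mathbb{Z}/k_2\mathbb{Z})^2$, and since $Q$ has integer coefficients the residue $Q(x,y)\bmod k_i$ depends only on $(x,y)\bmod k_i$.

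First I would substitute these two facts into $G_Q(k,1)=\sum_{x,y\bmod k}e_k(Q(x,y))$. Writing $x\leftrightarrow(x_1,x_2)$ and $y\leftrightarrow(y_1,y_2)$ under the CRT bijection, the summand becomes $e_{k_1}(u\,Q(x_1,y_1))\,e_{k_2}(v\,Q(x_2,y_2))$, and summing over the two pairs independently factors the sum:
\[
G_Q(k,1)=\Bigl(\sum_{x_1,y_1\bmod k_1}e_{k_1}(u\,Q(x_1,y_1))\Bigr)\Bigl(\sum_{x_2,y_2\bmod k_2}e_{k_2}(v\,Q(x_2,y_2))\Bigr)=G_Q(k_1,u)\,G_Q(k_2,v).
\]
Here $u$ is an inverse of $k_2$ modulo $k_1$ and $v$ an inverse of $k_1$ modulo $k_2$, so in particular $(u,k_1)=(v,k_2)=1$.

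Second, I would remove the twists $u,v$. In the situation where the lemma is applied one has $k\mid q$ with $(2\Delta,q)=1$, so $k_1,k_2$ are odd and coprime to $\Delta$; hence $(2\Delta u,k_1)=1$ and Lemma~\ref{Lem_G_Q(k,h,a)} gives $G_Q(k_1,u)=\bigl(\tfrac{u}{N(k_1,Q)}\bigr)G_Q(k_1,1)$. As in the proof of Lemma~\ref{lem_N(k,Q)}, the condition $(\Delta,k_1)=1$ forces $N(k_1,Q)=1$, so the Kronecker symbol equals $1$ and $G_Q(k_1,u)=G_Q(k_1,1)$; likewise $G_Q(k_2,v)=G_Q(k_2,1)$. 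Substituting into the display above yields $G_Q(k,1)=G_Q(k_1,1)G_Q(k_2,1)$. (Equivalently, within the running setting one may simply invoke \eqref{G(q,h)=G(q,1)} for $k_1,k_2\mid q$ to drop the twists directly.)

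The CRT factorization in the first step is entirely routine. The only point that genuinely has to be checked is the cancellation of the Kronecker symbols $\bigl(\tfrac{u}{N(k_i,Q)}\bigr)$ in the second step, and this is precisely where coprimality of the moduli with $\Delta$ — and not merely their oddness — enters, through $N(k_i,Q)=1$; so I expect that verification to be the only mild obstacle.
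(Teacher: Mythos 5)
Your proposal is correct and follows essentially the same route as the paper: a CRT factorization of the two-variable Gauss sum followed by stripping the resulting twists via Lemma~\ref{Lem_G_Q(k,h,a)} together with $N(k_i,Q)=1$ (the paper's substitution $x=x_1k_2+x_2k_1$ produces the twists $k_2,k_1$ rather than their inverses $u,v$, but this is a cosmetic difference). Your closing remark that coprimality with $\Delta$, not mere oddness, is what makes the twists disappear matches exactly how the paper uses \eqref{G(q,h)=G(q,1)} in its last line.
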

\begin{proof}
    Indeed, we calulate that
    \begin{align*}
        G_Q(k,1)
        =\sum_{x,y \bmod{k}} & e_k(a_1x^2+b_1xy+c_1y^2)\\
        =\sum_{x_1,y_1\bmod{k_1}}& \ \sum_{x_2,y_2\bmod{k_2}}e_k\biggl(a_1(x_1k_2+x_2k_1)^2+\\&b_1(x_1k_2+x_2k_1)(y_1k_2+y_2k_1)+c_1(y_1k_2+y_2k_1)^2\biggr)\\
        =\sum_{x_1,y_1\bmod{k_1}} & \ \sum_{x_2,y_2\bmod{k_2}}e\biggl(\frac{a_1k_2x_1^2}{k_1}+\frac{a_1k_1x_2^2}{k_2}+\frac{b_1k_2x_1y_1}{k_1}+\\
        &\frac{b_1k_1x_2y_2}{k_2}+\frac{c_1k_2y_1^2}{k_1}+\frac{c_1k_1y_2^2}{k_2}\biggr)\\
        =\sum_{x_1,y_1\bmod{k_1}} & e_{k_1}\biggl(k_2(a_1x_1^2+b_1x_1y_1+c_1y_1^2)\biggr)\times\\
         \sum_{x_2,y_2\bmod{k_2}} & e_{k_2}\biggl(k_1(a_1x_2^2+b_1x_2y_2+c_1y_2^2)\biggr)\\
        =G_Q(k_1,k_2)& G_Q(k_2,k_1)
        =G_Q(k_1,1)G_Q(k_2,1),
    \end{align*}
    where the last line is due to \eqref{G(q,h)=G(q,1)}.
\end{proof}
Now, by Lemma~\ref{lem_G(k,1)_mul},
\begin{equation} \label{lowupp}
    \sum_{k| q}\frac{\mu(k)}{k^2} G_Q(k,1)=\prod_{p|q}\left(1-\frac{G_Q(p,1)}{p^2}\right),
\end{equation}
and by Lemma~\ref{lem_bound_G_q(q,h)},
\begin{align}\label{inequality_G(k,1)}
   \prod_{p|q}\left(1-\frac{1}{p}\right)\le \left|\prod_{p|q}\left(1-\frac{G_Q(p,1)}{p^2}\right)\right|\le \prod_{p|q}\left(1+\frac{1}{p}\right),
\end{align}
where the products are restricted to prime divisors $p$ of $q$. This, together with \eqref{MT} gives 
\begin{align}\label{final_T_1}
    T_1=  \frac{4\pi \varphi(q)L}{\Delta^{1/2}q^2}\prod_{\substack{p|q\\p \text{ prime }}}\left(1-\frac{G_Q(p,1)}{p^2}\right) \int\limits_{0}^{\infty}w(x)dx + O_{\varepsilon}\left( \frac{Xq^{\varepsilon}}{q}\right),
\end{align}
where we have used the divisor bound $\tau(q)\ll_\varepsilon q^{\varepsilon}$.
Taking the well-known bound
$$
\frac{\varphi(q)}{q}=\prod\limits_{p|q}\left(1-\frac{1}{p}\right)\gg \frac{1}{\log\log q},
$$
into account, we deduce the following lemma from \eqref{inequality_G(k,1)} and \eqref{final_T_1}.
 
\begin{lem}\label{lem_MT} We have
\begin{align*}
    \frac{q^{2\beta}}{(\log\log q)^2}=\frac{XL}{q(\log\log q)^2}\ll T_1\ll \frac{XL}{q}=q^{2\beta}.
\end{align*}
\end{lem}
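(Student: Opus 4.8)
The plan is to obtain both bounds in the statement directly from the asymptotic expansion \eqref{final_T_1}, the estimate \eqref{inequality_G(k,1)} for the Euler product, and classical Mertens-type bounds for $\varphi(q)/q$. First I would note that, from the support of $w$ together with $0\le w\le 1$ and $w\equiv 1$ on $[5X/4,7X/4]$, one has $\int_0^\infty w(x)\,dx\asymp X$. Writing $P(q):=\prod_{p\mid q}\bigl(1-G_Q(p,1)/p^2\bigr)$ and using $X=Lq$, the main term on the right of \eqref{final_T_1} is then
$$
M:=\frac{4\pi\varphi(q)L}{\Delta^{1/2}q^2}\,P(q)\int_0^\infty w(x)\,dx\ \asymp_Q\ \frac{\varphi(q)L}{q^2}\,|P(q)|\,X\ =\ \frac{\varphi(q)L^2}{q}\,|P(q)| ,
$$
so everything reduces to estimating $\tfrac{\varphi(q)}{q}\,|P(q)|$ from above and from below.

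For the upper bound I would combine the right-hand inequality $|P(q)|\le\prod_{p\mid q}(1+1/p)$ from \eqref{inequality_G(k,1)} with $\varphi(q)/q=\prod_{p\mid q}(1-1/p)$, obtaining the telescoping estimate
$$
\frac{\varphi(q)}{q}\,|P(q)|\ \le\ \prod_{p\mid q}\Bigl(1-\frac1p\Bigr)\Bigl(1+\frac1p\Bigr)\ =\ \prod_{p\mid q}\Bigl(1-\frac1{p^2}\Bigr)\ \le\ 1 ,
$$
so that $M\ll_Q L^2=q^{2\beta}=XL/q$. For the lower bound I would instead use the left-hand inequality $|P(q)|\ge\prod_{p\mid q}(1-1/p)=\varphi(q)/q$ from \eqref{inequality_G(k,1)}, which yields
$$
M\ \gg_Q\ \frac{\varphi(q)L^2}{q}\cdot\frac{\varphi(q)}{q}\ =\ L^2\Bigl(\frac{\varphi(q)}{q}\Bigr)^2\ \gg\ \frac{L^2}{(\log\log q)^2}\ =\ \frac{q^{2\beta}}{(\log\log q)^2}\ =\ \frac{XL}{q(\log\log q)^2},
$$
the last step being the bound $\varphi(q)/q\gg 1/\log\log q$ already quoted in the text.

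Finally I would check that the error term $O_\varepsilon(Xq^\varepsilon/q)=O_\varepsilon(Lq^\varepsilon)=O_\varepsilon(q^{\beta+\varepsilon})$ in \eqref{final_T_1} is harmless: since $\beta\ge 2/5+3\varepsilon$ we have $\beta+\varepsilon<2\beta$, hence $q^{\beta+\varepsilon}=o\bigl(q^{2\beta}/(\log\log q)^2\bigr)$, so this term is absorbed by the lower bound for $M$ and does not affect the upper bound either. Thus $T_1\asymp M$ up to the stated $(\log\log q)^2$ slack, which is exactly the claim.

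There is no genuine difficulty here; the one point deserving care is that two independent factors of $\log\log q$ occur — one from the prefactor $\varphi(q)/q$ and one from $|P(q)|$ — and they must cancel in the upper bound while being kept in the lower bound. This is precisely why the identity $\prod_{p\mid q}(1-1/p)(1+1/p)=\prod_{p\mid q}(1-1/p^2)\le 1$ is invoked for the upper estimate, whereas two factors of $\varphi(q)/q\gg 1/\log\log q$ are retained for the lower estimate.
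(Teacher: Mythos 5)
Your argument is correct and is essentially the paper's own (the paper deduces the lemma from \eqref{final_T_1}, \eqref{inequality_G(k,1)} and $\varphi(q)/q\gg 1/\log\log q$ without spelling out the details); in particular you correctly identify the key cancellation $\prod_{p\mid q}(1-1/p)(1+1/p)\le 1$ needed for the upper bound and the retention of two factors $\gg 1/\log\log q$ for the lower bound, and you rightly check that the $O_\varepsilon(Xq^{\varepsilon}/q)=O_\varepsilon(q^{\beta+\varepsilon})$ error is dominated since $\beta+\varepsilon<2\beta$. The only cosmetic point is that $w\le 1$ is not a stated hypothesis, but $w\ll 1$ (hence $\int_0^\infty w(x)\,dx\asymp X$) follows from $w'(x)\ll X^{-1}$ together with the compact support in $[X,2X]$ and $w\equiv 1$ on $[5X/4,7X/4]$, so nothing is lost.
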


\section{Evaluation of $T_2$}

Inserting the formula \eqref{rform} for $\tilde{r}_{Q^*}(n,k,h)$ into \eqref{S=T_1+T_2}, we obtain
\begin{equation}\label{T_2_before_cut}
\begin{split}
    T_2&=\frac{2\pi}{q\Delta^{1/2}}\sum_{\st{b\leq L\\(b,q)=1}}\sum_{k| q}k^{-1}G_Q(k,1)\sum_{n\in \mathbb{Z}}r_{Q^*}(n)\tilde{w}_k(n)\sideset{}{^*}\sum_{h\bmod k}e_k(-\overline{a}bh-\overline{\Delta}\overline{h}n)\\
    &=\frac{2\pi}{q\Delta^{1/2}}\sum_{k| q}\frac{G_Q(k,1)}{k}\sum_{n\in \mathbb{Z}}\sum_{\st{b\leq L\\(b,q)=1}}r_{Q^*}(n)\tilde{w}_k(n)S(\overline{\Delta}n,\overline{a}b; k)\\
    &=\frac{2\pi}{q\Delta^{1/2}}\sum_{k| q}\frac{G_Q(k,1)}{k}\sum_{n\in \mathbb{Z}}\sum_{\st{b\leq L\\(b,q)=1}}r_{Q^*}(n)\tilde{w}_k(n)S(n,\overline{\Delta}\overline{a}b; k),
\end{split}
\end{equation}
since $(\overline{\Delta}, k)=1$. 

In order to bound $\tilde{w}_k(n)$, we use the following facts about Bessel functions, which are available at many places (see \cite[equations (9) and (10)]{Blomer}, for example). For any $\beta>0$ and positive integer $j$, by repeated integrations by parts, we have
\begin{align}\label{w^j(x)}
    \int\limits_0^{\infty}{w(x) J_0(\beta \sqrt{x}) dx }= \left(\frac{2}{\beta}\right)^j\int\limits_0^{\infty} w^{(j)}(x)x^{j/2}J_j(\beta\sqrt{x})dx.
\end{align}
Also, for any non-negative integer $j$ and arbitrarily small $\varepsilon>0$, we have 
\begin{align}\label{bound_J_j(x)}
    J_j(x)\ll_{\varepsilon} \frac{1}{\sqrt{1+x}}+\frac{1}{x^{j-\varepsilon}}.
\end{align}
Using the trivial bound $J_j(x)\ll_{\varepsilon} x^{\varepsilon}$ together with the fact that $w^{(j)}(x)\ll x^{-j}$, and recalling that $w(x)$ has compact support in $[X,2X]$, we infer from \eqref{def_w_k(n)} and \eqref{w^j(x)} that
\begin{equation}\label{bound_w(n)_in_j}
\begin{split}
    \tilde{w}_k(n) & \ll_{j,\varepsilon} \frac{1}{k}\left(\frac{k}{\sqrt{n}}\right)^j\int\limits_X^{2X}x^{-j}x^{j/2}x^{\varepsilon} dx \\
   & \ll_{j,\varepsilon}  \frac{1}{k}\left(\frac{k}{\sqrt{n}}\right)^jX^{-j/2 +1+\varepsilon}\\
    & =  \frac{X^{1+\varepsilon}}{k}\left(\frac{k}{\sqrt{Xn}}\right)^j
\end{split}
\end{equation}
for any $j\geq 1$. 
Thus, taking $j$ sufficiently large, we see that $\tilde{w}_k(n)$ is negligible (that is, $\tilde{w}_k(n)\ll X^{-20260}$) unless 
\begin{align*}
    \frac{k}{\sqrt{Xn}}\geq X^{-\varepsilon/2}, \text{  that is, } n\leq \frac{k^2}{X^{1-\varepsilon}}.
\end{align*} 
Hence, it follows from \eqref{T_2_before_cut} that 
\begin{align}\label{T_2_cut}
    T_2&=\frac{2\pi}{q\Delta^{1/2}}\sum_{k| q}\frac{G_Q(k,1)}{k}\sum_{n\leq k^2/X^{1-\varepsilon}} \sum_{\st{b\leq L\\(b,q)=1}}r_{Q^*}(n)\tilde{w}_k(n)S(n,\overline{\Delta}\overline{a}b; k)+O(X^{-2026}). 
\end{align}
\begin{lem}\label{lem_bound_gamma}
Set 
$$
\gamma(n):= \begin{cases} r_{Q^*}(n)\tilde{w}_k(n) & \mbox{ if } 1\le n\le k^2/X^{1-\varepsilon},\\ 0 & \mbox{ otherwise,} \end{cases}  
$$
where $r_{Q^*}(n)$ and $\tilde{w}_k(n)$ are given as in $\eqref{T_2_cut}$. Then we have 
\begin{align*}
    \|\bm{\gamma}\|_2:=\left(\sum_{n\in \mathbb{Z}}|\gamma(n)|^2\right)^{1/2}\ll_{\varepsilon} (kX)^{2\varepsilon}X^{1/2}.
\end{align*}
\end{lem}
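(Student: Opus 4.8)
The plan is to estimate $\|\bm{\gamma}\|_2^2 = \sum_{1\le n\le k^2/X^{1-\varepsilon}} r_{Q^*}(n)^2 \tilde{w}_k(n)^2$ by separately controlling the arithmetic factor $r_{Q^*}(n)$ and the analytic weight $\tilde{w}_k(n)$. First I would bound $r_{Q^*}(n)$: since $Q^*$ is a fixed integral quadratic form (built from $A^\dagger$ via the Smith normal form construction), it is either positive or negative definite or indefinite, but in all cases the number of representations of $n$ satisfies $r_{Q^*}(n) \ll_\varepsilon n^\varepsilon$ (for definite forms this is the same divisor-type bound as in Lemma~\ref{lem_bound_r_Q(n)}, namely $r_{Q^*}(n) \ll n^{c/\log\log n} \ll_\varepsilon n^\varepsilon$; for indefinite forms one restricts $n$ to the relevant bounded range, but in our application $n \le k^2/X^{1-\varepsilon} \le q^2/X^{1-\varepsilon} = q^{1-\beta+\varepsilon}$, which is a genuinely bounded range and the count is still $\ll_\varepsilon n^\varepsilon$). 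Since $n \le k^2/X^{1-\varepsilon} \le X^{1+\varepsilon}$ here (as $k\le q \le X$), we get $r_{Q^*}(n) \ll_\varepsilon X^\varepsilon$ uniformly over the range of summation.

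Next I would bound $\tilde{w}_k(n)$. From \eqref{bound_w(n)_in_j} with $j=0$ we have the trivial estimate $\tilde{w}_k(n) \ll_\varepsilon X^{1+\varepsilon}/k$ (taking $j=0$, which is permitted since $J_0(x)\ll_\varepsilon x^\varepsilon$ and no integration by parts is needed). Inserting both bounds,
\begin{align*}
\|\bm{\gamma}\|_2^2 \ll_\varepsilon \sum_{1\le n \le k^2/X^{1-\varepsilon}} X^{2\varepsilon} \cdot \frac{X^{2+2\varepsilon}}{k^2} \ll_\varepsilon \frac{k^2}{X^{1-\varepsilon}} \cdot \frac{X^{2+4\varepsilon}}{k^2} = X^{1+5\varepsilon}.
\end{align*}
Taking square roots gives $\|\bm{\gamma}\|_2 \ll_\varepsilon X^{1/2 + 3\varepsilon}$, and since $k\le q\le X$ we certainly have $X^{3\varepsilon} \le (kX)^{2\varepsilon}$ after renaming $\varepsilon$ (or one simply absorbs the loss: $X^{3\varepsilon}\ll (kX)^{2\varepsilon}$ fails in general, so instead I would phrase the final bound as $\|\bm\gamma\|_2\ll_\varepsilon X^{1/2+\varepsilon}$ and note $X^{1/2+\varepsilon}\le (kX)^{2\varepsilon}X^{1/2}$ trivially since $X^\varepsilon \le (kX)^{2\varepsilon}$). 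This yields the claimed bound $\|\bm{\gamma}\|_2 \ll_\varepsilon (kX)^{2\varepsilon} X^{1/2}$.

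The only subtlety — and the step deserving genuine care rather than routine computation — is the uniformity of the representation-count bound $r_{Q^*}(n) \ll_\varepsilon n^\varepsilon$ for the auxiliary form $Q^*$, since $Q^*$ need not be positive definite. The cleanest resolution is to observe that in the relevant range $n \le k^2/X^{1-\varepsilon}$ we have $|n|$ bounded polynomially in $X$, and for any fixed quadratic form the number of representations of an integer of size $\le T$ by that form is $\ll_\varepsilon T^\varepsilon$ (for definite forms by the ideal-counting argument of Lemma~\ref{lem_bound_r_Q(n)}; for indefinite forms by the theory of Pell equations, the number of solutions with bounded coordinates is again $\ll_\varepsilon T^\varepsilon$). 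Alternatively, since the determinant construction gives $\det A^* \mid (\det A)^{O(1)}$ and $Q^*$ arises by an $\mathrm{SL}_2(\mathbb{Z})$ change of variables from a form of the same discriminant up to the diagonal scaling by $B$, one checks directly that $r_{Q^*}(n)$ is controlled by $r_{Q'}(n')$ for a definite form $Q'$ and $n'$ a bounded multiple of $n$, reducing to Lemma~\ref{lem_bound_r_Q(n)} again. Everything else is a one-line insertion of the two bounds into the square of the $\ell^2$-norm, so this is the place where one must be precise.
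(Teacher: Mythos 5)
Your proposal is correct and follows essentially the same route as the paper: take $j=0$ in \eqref{bound_w(n)_in_j} to get $|\tilde{w}_k(n)|\le X^{1+\varepsilon}/k$, use $r_{Q^*}(n)\ll_\varepsilon n^\varepsilon$, and sum over $n\le k^2/X^{1-\varepsilon}$. Your extra discussion of possible indefiniteness of $Q^*$ is unnecessary (since $A^\dagger$ is positive definite and $A^*=BV^TA^\dagger VB$ is congruent to it, $Q^*$ is again positive definite, so the divisor-type bound applies directly), but it does no harm.
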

\begin{proof}
    Taking $j=0$ in \eqref{bound_w(n)_in_j}, we get  
    \begin{align*}
        |\tilde{w}_k(n)|\le \frac{X^{1+\varepsilon}}{k}.
    \end{align*}
Since $|r_{Q^*}(n)|\ll_{\varepsilon} n^{\varepsilon}$ for any $\varepsilon>0$, it follows that 
\begin{align*}
    \|\bm{\gamma}\|_2\ll (kX)^{2\varepsilon}X^{1/2}.
\end{align*}
\end{proof}
Now we are ready to bound the term $T_2$ in the following lemma.

\begin{lem}\label{lem_ET}
   Let $T_2$ be as given in \eqref{T_2_before_cut}. Then we have 
    \begin{align*}
        T_2\ll_{\varepsilon} q^{\varepsilon}(X^{1/2}L^{1/4}+X^{1/2}q^{-1/2}L+q^{1/2}L^{1/2})
    \end{align*}
for any $\varepsilon>0$. 
\end{lem}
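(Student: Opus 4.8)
The plan is to start from the cleaned-up expression \eqref{T_2_cut} for $T_2$ and estimate each dyadic piece in $k\mid q$ by invoking the bilinear Kloosterman bound of Lemma~\ref{lem_bound_aver_Kloos}. First I would fix a divisor $k\mid q$ and look at the inner double sum
$$
\Sigma_k := \sum_{n\leq k^2/X^{1-\varepsilon}}\ \sum_{\substack{b\leq L\\(b,q)=1}} \gamma(n)\, S(n,\overline{\Delta}\,\overline{a}\,b;\,k),
$$
where $\gamma(n)=r_{Q^*}(n)\tilde{w}_k(n)$ is the coefficient sequence from Lemma~\ref{lem_bound_gamma}. To match the shape of $S_{q,a}(\bm{\alpha})$ in Lemma~\ref{lem_bound_aver_Kloos}, I would take the modulus there to be $k$, the sequence $\bm{\alpha}$ to be $\{\gamma(n)\}$ supported on the interval $I$ of length $M:=k^2/X^{1-\varepsilon}$, and the variable $n$ of that lemma to be $b$ ranging over an interval of length $N:=L$; the multiplier $\overline{\Delta}\,\overline{a}$ is a fixed invertible residue, so $\gcd(a,q)$ in the lemma becomes $d=\gcd(\overline{\Delta}\,\overline{a},k)=1$. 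Plugging $d=1$ into the second option \eqref{relevantbound} for $S_1$, namely $M^{-1/2}(N^{-3/4}k^{1/2}+1)+N^{-1/2}$, and using $\|\bm{\gamma}\|_2 \ll_\varepsilon (kX)^{2\varepsilon}X^{1/2}$ from Lemma~\ref{lem_bound_gamma}, gives
$$
\Sigma_k \ll_\varepsilon (kX)^{O(\varepsilon)}\, X^{1/2}\, M^{1/2}\, N\, k^{1/2}\left(M^{-1/2}N^{-3/4}k^{1/2}+M^{-1/2}+N^{-1/2}\right),
$$
which after simplification is $\ll_\varepsilon q^{O(\varepsilon)} X^{1/2}\big(L^{1/4}k + L\, M^{1/2} k^{1/2} M^{-1/2}\cdot\!\text{(wait—collect carefully)}\big)$; the three terms collapse to $X^{1/2}\big(L^{1/4}k^{1/2}\cdot k^{1/2} + L k^{1/2} + L^{1/2}M^{1/2}k\big)$ up to $q^\varepsilon$, and substituting $M = k^2/X^{1-\varepsilon}\le q^2/X^{1-\varepsilon}$ and $k\le q$ turns this into $X^{1/2}L^{1/4} + X^{1/2}q^{-1/2}L + q^{1/2}L^{1/2}$ up to $q^\varepsilon$.

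Next I would reassemble: the prefactor $\tfrac{2\pi}{q\Delta^{1/2}}$ together with $G_Q(k,1)/k$ must be controlled. Here I would use Lemma~\ref{lem_bound_G_q(q,h)} with $N(k,Q)=1$ (Lemma~\ref{lem_N(k,Q)}), which gives $|G_Q(k,1)|\le k$, so $|G_Q(k,1)/k|\le 1$, and the sum over $k\mid q$ of the per-$k$ bound costs only a divisor factor $\tau(q)\ll_\varepsilon q^\varepsilon$. Combining with the negligible error $O(X^{-2026})$ from \eqref{T_2_cut}, I obtain exactly
$$
T_2 \ll_\varepsilon q^{\varepsilon}\left(X^{1/2}L^{1/4}+X^{1/2}q^{-1/2}L+q^{1/2}L^{1/2}\right),
$$
which is the claimed bound.

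The main obstacle is bookkeeping rather than deep input: one has to choose the right option among the three bounds for $S_1$ in \eqref{relevantbound} — the middle one, because $d=1$ kills the $d^{1/2}$ term and $N^{-3/4}k^{1/2}$ is the favorable power of the length-$L$ variable — and then carefully verify that the support length $M=k^2/X^{1-\varepsilon}$, which can be as large as $q^{1+\varepsilon}/X^{-\varepsilon}\cdot\!$hmm, i.e.\ up to roughly $q^{2}/X = q^{1-\beta}$, does not blow up the $M^{1/2}$ factor: indeed $X^{1/2}L^{1/2}M^{1/2}k^{1/2}\cdot M^{-1/2}$ (from the $N^{-1/2}$ term) is $X^{1/2}L^{1/2}k^{1/2}\le q^{1/2}L^{1/2}$ after noting $k\le q$ and $X^{1/2}$ cancels against... actually the $N^{-1/2}$ term yields $q^{1/2}L^{1/2}$ only after using $M\le q^2/X^{1-\varepsilon}$ and $X = q^{1+\beta}$. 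I would double-check each of the three resulting terms against these substitutions; a secondary subtlety is confirming that $r_{Q^*}(n)\ll_\varepsilon n^\varepsilon$ (used in Lemma~\ref{lem_bound_gamma}) applies to the possibly non-primitive form $Q^*$, which follows from the same ideal-counting argument as in Lemma~\ref{lem_bound_r_Q(n)} since $Q^*$ is still a fixed positive definite form.
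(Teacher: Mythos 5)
Your overall strategy is the same as the paper's: start from \eqref{T_2_cut}, apply Lemma~\ref{lem_bound_aver_Kloos} with modulus $k$, sequence $\gamma(n)=r_{Q^*}(n)\tilde w_k(n)$ of length $M=k^2/X^{1-\varepsilon}$, the middle option in \eqref{relevantbound}, and the bound $\|\bm\gamma\|_2\ll (kX)^{2\varepsilon}X^{1/2}$ from Lemma~\ref{lem_bound_gamma}; then use $|G_Q(k,1)|\le k$, the prefactor $1/q$, $k\le q$ and $\tau(q)\ll q^\varepsilon$. The three per-$k$ terms you eventually extract, $X^{1/2}kL^{1/4}+X^{1/2}k^{1/2}L+k^{3/2}L^{1/2}$, agree with the paper's, and dividing by $q$ gives the claimed bound.

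There is, however, one genuine gap: you apply Lemma~\ref{lem_bound_aver_Kloos} directly to the sum $\sum_{b\le L,\,(b,q)=1}$, but the lemma is stated for the inner variable running over a \emph{full} interval $J$, with no coprimality restriction. You cannot simply ``take $N:=L$'' and declare $d=\gcd(\overline{\Delta}\overline{a},k)=1$. The paper first removes the condition $(b,q)=1$ by M\"obius inversion, writing the sum as $\sum_{d\mid q}\mu(d)\sum_{b\le L/d}(\cdots)S(n,\overline{\Delta}\overline{a}db;k)$; this is exactly what forces the parameter $d$ of Lemma~\ref{lem_bound_aver_Kloos} to be $s=(d,k)$, generally $\neq 1$, and one must then check that the $d$-dependence of the resulting three terms (negative powers of $d$) lets the $d$-sum be absorbed into $q^{\varepsilon}$. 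Your $d=1$ computation recovers only the $d=1$ term of that expansion. The omission is repairable and does not change the final exponents, but as written the application of the key lemma is not legitimate. Separately, your bookkeeping in the last paragraph is garbled (e.g.\ the $N^{-1/2}$ term contributes $\|\bm\gamma\|_2M^{1/2}N^{1/2}k^{1/2}=k^{3/2}L^{1/2}X^{O(\varepsilon)}$, not $X^{1/2}L^{1/2}k^{1/2}$, the point being that $M^{1/2}\approx kX^{-1/2}$ cancels the $X^{1/2}$ from $\|\bm\gamma\|_2$); you do self-correct, but this step should be written out cleanly before the proof can be accepted.
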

\begin{proof}
In order to apply Lemma~\ref{lem_bound_aver_Kloos}, we need to remove the coprimality condition $(b,q)=1$ in the innermost sum. We do this by M\"obius inversion, obtaining
\begin{align*}
    &\sum_{n\leq k^2/X^{1-\varepsilon}} \sum_{\st{b\leq L\\(b,q)=1}}r_{Q^*}(n)\tilde{w}_k(n)S(n,\overline{\Delta}\overline{a}b; k)\\
    =&\sum_{n\leq k^2/X^{1-\varepsilon}} \sum_{\st{b\leq L}}\left( \sum_{d|(b,q)}\mu(d)\right)r_{Q^*}(n)\tilde{w}_k(n)S(n,\overline{\Delta}\overline{a}b; k)\\
=&\sum_{d|q}\mu(d)\sum_{n\leq k^2/X^{1-\varepsilon}} \sum_{\st{b\leq L/d}}r_{Q^*}(n)\tilde{w}_k(n)S(n,\overline{\Delta}\overline{a}db; k).  
\end{align*}
Set  $s:=(\overline{\Delta}\overline{a}d,k)=(d,k)$. Then
applying Lemma \ref{lem_bound_aver_Kloos} with $\gamma(n)=r_{Q^{*}}(n)\tilde{w}_k(n)$ to the bilinear sum over $n$ and $b$ in the last line and choosing $S_1(M, N, q,d)$ as in \eqref{relevantbound}, we obtain
\begin{align*}
&\sum_{n\leq k^2/X^{1-\varepsilon}} \sum_{\st{b\leq L/d}}r_{Q^*}(n)\tilde{w}_k(n)S(n,\overline{\Delta}\overline{a}db; k)\\
\ll & X^{\varepsilon}\|\bm{\gamma}\|_2(k^2/X)^{1/2}(L/d) k^{1/2+o(1)}\left\{(k^2/X)^{-1/2}\left((L/d)^{-3/4}k^{1/2}+s^{1/2}\right)+(L/d)^{-1/2}\right\}\\
\ll & (kX)^{\varepsilon}\|\bm{\gamma}\|_2\left\{kL^{1/4}d^{-1/4}+k^{1/2}Ld^{-1/2}+k^{3/2}L^{1/2}d^{-1/2}X^{-1/2}\right\}\\
\ll & (kX)^{3\varepsilon}\left\{X^{1/2}kL^{1/4}d^{-1/4}+X^{1/2}k^{1/2}Ld^{-1/2}+k^{3/2}L^{1/2}d^{-1/2}\right\},
\end{align*}
where the last line is due to the bound for $\|\bm{\gamma}\|_2$ in Lemma~\ref{lem_bound_gamma}.
Therefore,
\begin{align*}
    &\sum_{d|k}\mu(d)\sum_{n\leq k^2/X^{1-\varepsilon}} \sum_{\st{b\leq L/d}}r_{Q^*}(n)\tilde{w}_k(n)S(n,\overline{\Delta}\overline{a}db; k)\\
    \ll &(kX)^{3\varepsilon}k^{\varepsilon/2}\left\{X^{1/2}kL^{1/4}+X^{1/2}k^{1/2}L+k^{3/2}L^{1/2}\right\}.
\end{align*}
Combining this with \eqref{T_2_cut} and using the bound $|G_Q(k,1)|\le k$ from Lemma~\ref{lem_bound_G_q(q,h)}, we obtain
\begin{align*}
    T_2 \ll (Xq)^{4\varepsilon}(X^{1/2}L^{1/4}+X^{1/2}q^{-1/2}L+q^{1/2}L^{1/2}).
\end{align*}
As $X$ is a power of $q$, changing $\varepsilon$ in a suitable way, the bound in Lemma \ref{lem_ET} follows.
\end{proof}
\section{Proof of Theorem~\ref{main_thm}}
By \eqref{S=T_1+T_2}, we have 
\begin{align*}
    S=T_1+T_2,
\end{align*}
where $T_1$ is the main term and $T_2$ the error term. The main term satisfies the asymptotic formula \eqref{final_T_1} and the bounds
\begin{align*}
    \frac{q^{2\beta}}{(\log\log q)^2}\ll T_1\ll q^{2\beta}
\end{align*}
from Lemma~\ref{lem_MT}. Lemma~\ref{lem_ET} provides the upper bound 
\begin{align*}
    T_2\ll q^{\varepsilon}(X^{1/2}L^{1/4}+X^{1/2}q^{-1/2}L+q^{1/2}L^{1/2})
\end{align*}
for the error term.
Recall the choices $L=q^\beta$ and $X=qL=q^{1+\beta}$ with $0<\beta<1.$ We calculate that 
\begin{align}\label{final_T_2}
    T_2= O_\varepsilon\left(q^{2\beta-\varepsilon}\right)
\end{align}
if $\beta\ge 2/5+3\varepsilon$. This implies the result in Theorem \ref{main_thm}.

\begin{re}
We point out that the Weil bound $S(m, an; q)\ll (m,n,q)^{1/2}q^{1/2+o(1)}$ for the Kloosterman sum yields the ``trivial bound" 
$$
S_{q,a}(\bm{\alpha})\ll \left(\sum_{m\in I}|\alpha_m|^2\right)^{1/2}M^{1/2}Nq^{1/2+o(1)}
$$
in place of \eqref{bibound}. Using this bound leads to a shorter $\beta$-range $1/2<\beta\le 1$ in place of $2/5<\beta\le 1$ in Corollary \ref{main_thm 2} and thus restricts the exponent $\gamma$ to the shorter interval $0\le \gamma<1/3$ in place of $0\le \gamma<3/7$. Conjecturally, the best we may hope for is the bound 
$$
S_{q,a}(\bm{\alpha})\ll \left(\sum_{m\in I}|\alpha_m|^2\right)^{1/2}M^{1/2}N^{1/2}q^{1/2+o(1)}
$$  
(square root cancellation on average in the $n$-sum), which would result in a $\beta$-range of $1/3<\beta\le 1$ and hence in a $\gamma$-interval of $0\le \gamma<1/2$. Extending these ranges further would likely require different techniques than those used in the present paper.
\end{re}





\section*{Acknowledgments}
We thank Prof. Harman for bringing Cook's and Danicic's results in \cite{Cook} and \cite{Danicic} to our attention, which we had overlooked in an earlier version of this paper. The first-named author would like to thank the Ramakrishna Mission Vivekananda Educational and Research Insititute for an excellent work environment. The research of the second-named author was supported by a Prime Minister Research Fellowship (PMRF ID- 0501972), funded by the Ministry of Education, Govt. of India.


\end{document}